\documentclass[onefignum,onetabnum]{siamart190516_mod}

\usepackage{mathtools,amsfonts}
\usepackage[utf8]{inputenc}
\usepackage[T1]{fontenc}

\usepackage{enumitem}
\setlist[enumerate]{label=(\alph*)}

\usepackage[boxed]{algorithm2e}
\setlength{\algomargin}{1em}

\SetAlCapSkip{.5em}

\crefname{algocf}{Algorithm}{Algorithms}
\numberwithin{algocf}{section}

\renewcommand\phi\varphi
\newcommand\eps\varepsilon

\definecolor{todocolor}{rgb}{1.0,0.3,0.3}

\newcommand{\mscLink}[1]{\href{https://www.ams.org/mathscinet/msc/msc2020.html?t=#1}{#1}}

\newcommand{\PhiFB}{\Phi_{\textup{FB}}}
\newcommand{\cl}{\operatorname{cl}}
\newcommand{\downto}{\downarrow}
\newcommand{\LL}{\mathcal{L}}
\newcommand{\N}{\mathbb{N}}
\DeclareMathAlphabet{\mathpzc}{OT1}{pzc}{m}{it}
\newcommand{\oo}{\mathpzc{o}}
\newcommand{\OO}{\mathcal{O}}
\newcommand{\R}{\mathbb{R}}
\newcommand{\Span}{\operatorname{span}}

\newsiamremark{example}{Example}
\newsiamremark{remark}{Remark}


\newcommand{\skipline}{\hfill \par}

\newif\ifpreprint
\preprinttrue

\DeclarePairedDelimiter\myrbraces{.}{\}}
\providecommand\given{\nonscript\;\delimsize|\nonscript\;}
\DeclarePairedDelimiterX\set[1]{\{}{\}}{#1}
\DeclarePairedDelimiter\abs{\lvert}{\rvert}
\DeclarePairedDelimiter\norm{\lVert}{\rVert}
\DeclarePairedDelimiter\paren{(}{)}

\crefname{section}{Section}{Sections}
\crefname{subsection}{Subsection}{Subsections}

\allowdisplaybreaks

\headers{A Newton method for the M-stationarity system}{F. Harder, P. Mehlitz, G. Wachsmuth}

\title{Reformulation of the M-stationarity conditions  as a system of discontinuous equations and its solution by a semismooth Newton method\thanks{Submitted to the editors DATE.
\funding{This work is supported by the DFG Grant \emph{Bilevel Optimal Control: Theory, Algorithms, and Applications}
(Grant No.\ WA 3636/4-2) within the Priority Program SPP 1962 (Non-smooth
and Complementarity-based Distributed Parameter Systems: Simulation and Hierarchical Optimization).}}}

\author{%
	Felix Harder\thanks{%
		Brandenburgische Technische Universität Cottbus-Senftenberg, 
		Institute of Mathematics, 
		03046 Cott\-bus, Germany, 
		\url{https://www.b-tu.de/fg-optimale-steuerung}
	}
	\and
	Patrick Mehlitz\footnotemark[2]
	\and
	Gerd Wachsmuth\footnotemark[2]
}

\begin{document}

\maketitle

\begin{abstract}
	We show that the Mordukhovich-stationarity system
	associated with a mathematical program with complementarity constraints (MPCC)
	can be equivalently written as a system of discontinuous equations which can
	be tackled with a semismooth Newton method. 
	It will be demonstrated that the resulting algorithm
	can be interpreted as an active set strategy for MPCCs.
	Local fast convergence of the method is guaranteed under validity of an 
	MPCC-tailored version of LICQ and a suitable strong second-order condition. 
	In case of linear-quadratic MPCCs, the LICQ-type constraint qualification
	can be replaced by a weaker condition which depends on the underlying multipliers. 
	We discuss a suitable globalization strategy for our method.
	Some numerical results are presented in order to illustrate our theoretical
	findings.
\end{abstract}

\begin{keywords}
	Active set method,
	Mathematical program with complementarity constraints, 
	M-stationarity,
	Nonlinear M-stationarity function,
	Semismooth Newton method
\end{keywords}

\begin{AMS}
	\mscLink{49M05}, \mscLink{49M15}, \mscLink{90C30}, \mscLink{90C33}
\end{AMS}

\section{Introduction}

We aim for the numerical solution of so-called 
\emph{mathematical programs with complementarity constraints}
(MPCCs for short) which are nonlinear optimization problems of the form
\begin{equation}\label{eq:MPCC}\tag{MPCC}
	\begin{aligned}
		\min_{x} \quad& f(x)
		\\
		\text{s.t.}\quad&
		\begin{aligned}[t]
			g(x)	&\leq 	0,
			&
			h(x)	&=		0,
			\\
			G(x) 	&\geq	0,
			&
			H(x) 	&\geq 	0,
			&
			G(x)^\top H(x) &=	0.
		\end{aligned}
	\end{aligned}
\end{equation}
Throughout the article, we assume that the data functions $f\colon\R^n\to\R$, $g\colon\R^n\to\R^\ell$,
$h\colon\R^n\to\R^m$, and $G,H\colon\R^n\to\R^p$ are twice continuously differentiable.
Observing that most of the standard constraint qualifications fail to hold at the feasible points of
\eqref{eq:MPCC} while the feasible set of it is likely to be (almost) disconnected,
complementarity-constrained programs form a challenging class of optimization problems.
On the other hand, several real-world optimization scenarios from mechanics, finance, or
natural sciences naturally comprise equilibrium conditions which is why they can be modeled 
in the form \eqref{eq:MPCC}.
For an introduction to the topic of complementarity-constrained programming, the interested reader
is referred to the monographs \cite{LuoPangRalph1996,OutrataKocvaraZowe1998}.
In the past, huge effort has been put into the development of problem-tailored constraint qualifications
and stationarity notions which apply to \eqref{eq:MPCC}, see e.g.\ \cite{ScheelScholtes2000,Ye2005} for
an overview. Second-order necessary and sufficient optimality conditions for \eqref{eq:MPCC} are discussed
in \cite{Gfrerer2014,GuoLinYe2013,ScheelScholtes2000}.
There exist several different strategies in order to handle the inherent difficulties of \eqref{eq:MPCC}
in the context of its numerical solution. A common idea is to relax the complementarity constraints and
to solve the resulting standard nonlinear surrogate problems with a standard method, see e.g.\
\cite{HoheiselKanzowSchwartz2013} for an overview. Problem-tailored penalization approaches are discussed e.g.\ in 
\cite{HuRalph2004,HuangYangZhu2006,LeyfferLopezNocedal2006,RalphWright2004}.
Possible approaches for adapting the well-known SQP method of nonlinear programming to \eqref{eq:MPCC}
are investigated in \cite{BenkoGfrerer2016,FletcherLeyfferRalphScholtes2006,LuoPangRalph1998}.
Active set strategies for the numerical solution of \eqref{eq:MPCC} with affine complementarity constraints 
are under consideration in \cite{FukushimaTseng2002,JudiceSheraliRibeiroFaustino2007}.
In \cite{IzmailovPogosyanSolodov2012}, the authors combine a lifting approach as well as a
globalized semismooth Newton-type method in order to solve \eqref{eq:MPCC}.
Furthermore, we would like to mention the paper \cite{GuoLinYe2015} where the authors reformulate
different stationarity systems of \eqref{eq:MPCC} as (over-determined) nonlinear systems of equations subject
to a polyhedron, and the latter systems are solved via a Levenberg--Marquardt method. 

Using so-called NCP-functions, where NCP abbreviates \emph{nonlinear complementarity problem}, complementarity
restrictions can be transferred into systems of equations which are possibly nonsmooth.
Recall that a function $\pi\colon\R^2\to\R$ is called NCP-function whenever it satisfies
\[
	\forall (a,b)\in\R^2\colon\quad
	\pi(a,b)=0\,\Longleftrightarrow\,a,b\geq 0\,\land\,ab=0.
\]
Two popular examples of such NCP-functions are given by the minimum-function 
$\pi_\textup{min}\colon\R^2\to\R$ as
well as the Fischer--Burmeister-function $\pi_\textup{FB}\colon\R^2\to\R$ defined below:
\[
	\forall (a,b)\in\R^2\colon\quad
	\pi_\textup{min}(a,b):=\min(a,b),
	\quad
	\pi_\textup{FB}(a,b):=\sqrt{a^2+b^2}-a-b.
\]
A convincing overview of existing NCP-functions and their properties can be found in 
\cite{Galantai2012,KanzowYamashitaFukushima1997,SunQi1999}.
We note that most of the established NCP-functions like $\pi_\textup{min}$ or $\pi_\textup{FB}$ are nonsmooth.
Classically, NCP-function have been used to transfer Karush--Kuhn--Tucker (KKT) systems of standard nonlinear problems
with inequality constraints into systems of equations which then are tackled with the aid of
a Newton-type method which is capable of handling the potentially arising nonsmoothness, 
see \cite{DeLucaFacchineiKanzow1996,DeLucaFacchineiKanzow2000,FacchineiSoares1997} for an overview.
Furthermore, these papers report on the differentiability of the function $\pi_\textup{FB}^2$
which can be exploited in order to globalize the resulting Newton method.
In \cite{IzmailovSolodov2008}, the authors extended this idea to \eqref{eq:MPCC} by interpreting
it as a nonlinear problem. Under reasonable assumptions, local quadratic convergence to
so-called strongly stationary points has been obtained and suitable globalization strategies have
been presented.

In this paper, we aim to reformulate Mordukhovich's system of stationarity 
(the so-called system of M-stationarity) 
associated with \eqref{eq:MPCC} as a system of nonsmooth equations
which can be solved by a semismooth Newton method.
Our study is motivated by several different aspects. First, we would like to
mention that the set
\begin{equation}\label{eq:M_stationarity_set}
	M:=\set*{(a,b,\mu,\nu)\in\R^4\given 
		\begin{aligned}
			&0 \le a \perp b \ge 0, \,
			a \mu = 0,\;b \nu = 0, \\
			&(\mu\nu =0 \,\lor\, \mu<0,\nu<0)
		\end{aligned}
		\;
	},
\end{equation}
which is closely related to the M-stationarity system of \eqref{eq:MPCC},
see \cref{def:stationarity_systems}, is
closed. In contrast, the set
\begin{equation*}
	\tilde M:=\set*{(a,b,\mu,\nu)\in\R^4\given 
		\begin{aligned}
			&0 \le a \perp b \ge 0, \,
			a \, \mu = 0,\;b \, \nu = 0, \\
			&a=b=0\,\Rightarrow\,\mu,\nu \le 0
		\end{aligned}
		\;
		},
\end{equation*}
which similarly corresponds to
the system of strongly stationary points associated with
\eqref{eq:MPCC}, is not closed. In fact, $M$ is the closure of $\tilde M$.
Based on this topological observation, it is clear
that searching for M-stationary points is far more promising than searching
for strongly stationary points as long as both stationarity systems are
transferred into systems of nonsmooth equations which can be solved by 
suitable methods. 
In \cite{GuoLinYe2015}, the authors transferred the M-stationarity system of
\eqref{eq:MPCC} into a smooth (over-determined) system of equations subject to
a polyhedron, and they solved it with the aid of a modified Levenberg--Marquardt
method. It has been shown that the resulting algorithm converges quadratically to
an M-stationary point whenever an abstract error bound condition holds at the limit. 
Our aim in this
paper is to use a nonsmooth reformulation of the M-stationarity system which
can be tackled with a semismooth Newton method in order to ensure local
fast convergence of the resulting algorithm under suitable assumptions, 
namely MPCC-LICQ, an MPCC-tailored variant of the prominent Linear Independence Constraint
Qualification (LICQ), and MPCC-SSOC, an MPCC-tailored strong second-order condition,
have to hold at the limit point, see \cref{def:MPCC_LICQ,def:MPCC_SSC}
as well as \cref{thm:local_convergence}.
Using a continuously differentiable merit function,
we are in position to globalize our method,
see \cref{sec:glob}.
Observing that the strongly
stationary points of \eqref{eq:MPCC} can be found among its M-stationary 
points, the resulting method may converge to strongly stationary points, too. 
Let us mention that even in the absence of MPCC-LICQ, local
fast convergence of the method is possible if the linearly dependent gradients do not
appear in the Newton system, and, anyway, local slow convergence will be always
guaranteed via our globalization strategy. 
It will turn out that whenever the objective $f$ of \eqref{eq:MPCC} is quadratic
while the constraint functions $g$, $h$, $G$, and $H$ are affine, then we 
actually can replace MPCC-LICQ by a slightly weaker condition depending on the
multipliers at the limit point, see \cref{sec:linear_quadratic}.

The manuscript is organized as follows:
In \cref{sec:preliminaries}, we summarize the essential preliminaries.
Particularly, we recall some terminology from
complementarity-constrained programming and review the foundations of
semismooth Newton methods. 
\Cref{sec:M_St_as_system} is dedicated to
the reformulation of the M-stationarity system associated with \eqref{eq:MPCC}
as a system of nonsmooth equations. In order to guarantee that a Newton-type
method can be applied in order to solve the resulting system, we first
motivate the general structure of this system. Afterwards, we introduce
a so-called \emph{nonlinear M-stationarity function} whose roots are
precisely the elements of the set $M$ from \eqref{eq:M_stationarity_set}.
Although the nonlinear M-stationarity function of our interest
is nonsmooth and even discontinuous, we prove that it is Newton 
differentiable on the set $M$. Based on this function, we construct a
semismooth Newton method which solves the M-stationarity system of
\eqref{eq:MPCC} in \cref{sec:newton_method}. Furthermore, we provide a local
convergence analysis which shows that our method ensures local 
superlinear convergence
under validity of MPCC-LICQ and MPCC-SSOC at the limit point.
Local quadratic convergence can be achieved if, additionally,
the second-order derivatives of $f$, $g$, $h$, $G$, and $H$ are locally Lipschitz
continuous.
Moreover, we
illustrate that our Newton-type method can be interpreted as an active set
strategy for \eqref{eq:MPCC}. In \cref{sec:glob}, the globalization of the
algorithm is discussed. We exploit the standard idea to minimize 
a continuously differentiable merit function. 
In \cref{sec:linear_quadratic}, we show that it is possible to relax 
the requirement that MPCC-LICQ holds
in the setting of a linear-quadratic model problem \eqref{eq:MPCC}
while keeping all the desired convergence properties.
Numerical experiments are presented in \cref{sec:numerics}.
Some remarks close the paper in \cref{sec:conclusions}.

\section{Preliminaries}\label{sec:preliminaries}

\subsection{Notation}
We introduce the index sets
$I^\ell:=\{1,\ldots,\ell\}$, $I^m:=\{1,\ldots,m\}$, and $I^p:=\{1,\ldots,p\}$.
The component mappings of $g$, $h$, $G$, and $H$ are denoted by $g_i$ ($i\in I^\ell$), $h_i$ ($i\in I^m$),
$G_i$ ($i\in I^p$), and $H_i$ ($i\in I^p$), respectively.

We use $0$ in order to denote the scalar zero as well as the zero matrix of appropriate
dimensions. 
For $i\in\{1,\ldots,n\}$, we use $e_i\in\R^n$ to represent the $i$-th unit vector.
For a vector $v\in\R^n$ and a set $I\subset\{1,\ldots,n\}$, $v_I\in\R^{|I|}$ denotes the vector which results
from $v$ by deleting all entries corresponding to indices from $\{1,\ldots,n\}\setminus I$.
Similarly, for a matrix $V\in\R^{n\times m}$, $V_I\in\R^{|I|\times m}$ denotes the matrix which is obtained
by deleting all those rows from $V$ whose indices correspond to the elements of $\{1,\ldots,n\}\setminus I$.
If (row) vectors $v_i$, $i\in I$, are given, then $[v_i]_{I}$
denotes the matrix whose rows are precisely the vectors $v_i$, $i \in I$.
Finally, let us mention that for $x\in\R^n$ and $\varepsilon>0$, $B_\varepsilon(x)$ represents the
closed $\varepsilon$-ball around $x$.

\begin{lemma}\label{lem:span_of_polyhedral_cone}
	Let $A$ and $B$ be matrices of suitable dimensions satisfying
	\[
		A^\top\lambda+ B^\top\eta=0,\,\lambda\geq 0
		\quad
		\Longrightarrow
		\quad
		\lambda=0,
	\]
	i.e., the rows of $A$ are positive linearly independent from the rows of $B$.
	Then it holds
	\[
		\Span\{d\in\R^n\,|\,Ad\leq 0,\,Bd=0\}
		=
		\{d\in\R^n\,|\,Bd=0\}.
	\]
\end{lemma}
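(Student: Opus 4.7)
The plan is to argue by polar-cone duality. The inclusion ``$\subseteq$'' is immediate because every element of $\{d\in\R^n\,|\,Ad\leq 0,\,Bd=0\}$ already lies in the linear subspace $\{d\in\R^n\,|\,Bd=0\}$, hence so does every linear combination. All of the work is therefore in the reverse inclusion.

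For that direction, I would set $C:=\{d\in\R^n\,|\,Ad\leq 0,\,Bd=0\}$ and note that $C$ is a convex cone, so $\Span C = C - C$. Taking orthogonal complements turns this identity into
\[
(\Span C)^\perp = C^\circ \cap (-C^\circ),
\]
where $C^\circ$ denotes the polar cone of $C$. Since $C$ is polyhedral, Farkas' lemma supplies the explicit description
\[
C^\circ = \{A^\top\lambda + B^\top\eta \,|\, \lambda\geq 0\}.
\]

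The endgame is then brief. Picking $v\in C^\circ\cap(-C^\circ)$, I would represent it as both $v = A^\top\lambda_1 + B^\top\eta_1$ and $-v = A^\top\lambda_2 + B^\top\eta_2$ with $\lambda_1,\lambda_2\geq 0$, and add the two identities. This yields $A^\top(\lambda_1+\lambda_2) + B^\top(\eta_1+\eta_2) = 0$ with $\lambda_1+\lambda_2\geq 0$, so the hypothesis forces $\lambda_1=\lambda_2=0$. Hence $v = B^\top\eta_1 \in \operatorname{range}(B^\top) = \{d\in\R^n\,|\,Bd=0\}^\perp$, which gives $(\Span C)^\perp \subseteq \{d\,|\,Bd=0\}^\perp$. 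Taking orthogonal complements one last time delivers $\Span C \supseteq \{d\,|\,Bd=0\}$, as required.

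The only nonbookkeeping step is the Farkas-type formula for $C^\circ$; once that is in hand, the hypothesis plugs in mechanically via the addition trick. I therefore do not foresee a substantive obstacle, though for a self-contained write-up one should either cite Farkas' lemma directly or note in passing that the right-hand side above is the image of a polyhedral cone under a linear map and hence automatically closed.
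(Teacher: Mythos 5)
Your proof is correct and follows essentially the same route as the paper: both identify $C^\circ\cap(-C^\circ)$ with the polar/orthogonal complement of the subspace $\{d\,|\,Bd=0\}$ by combining the Farkas description of $C^\circ$ with the addition trick, and both pass back through duality. The only cosmetic difference is in the final step, where the paper invokes the bipolar theorem and the identity $(C^\circ\cap(-C^\circ))^\circ=\cl(C^{\circ\circ}-C^{\circ\circ})$, while you observe directly that $(\Span C)^\perp=C^\circ\cap(-C^\circ)$ and then take orthogonal complements of subspaces, which is a touch more streamlined and avoids the closure bookkeeping.
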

\begin{proof}
	For brevity, let us set
	\[
		C:=\{d\in\R^n\,|\,Ad\leq 0,\,Bd=0\}
		\qquad\text{and}\qquad
		Q:=\{d\in\R^n\,|\,Bd=0\}.
	\]
	Observing that these sets are polyhedral cones, we obtain
	\[
		C^\circ=\{A^\top\lambda+B^\top\eta\,|\,\lambda\geq 0\}
		\qquad\text{and}\qquad
		Q^\circ=B^\top\R^m
	\]
	where $X^\circ:=\{y\in\R^n\,|\,\forall x\in X\colon\,x^\top y\leq 0\}$ denotes
	the polar cone of a set $X\subset\R^n$.
	Clearly, it holds $Q^\circ\subset C^\circ\cap(-C^\circ)$.
	On the other hand, for $v\in C^\circ\cap(-C^\circ)$,
	we find vectors $\lambda^1,\lambda^2\geq 0$ and $\eta^1,\eta^2$
	satisfying $v=A^\top\lambda^1+B^\top\eta^1=-A^\top\lambda^2-B^\top\eta^2$.
	This leads to
	\[
		0=A^\top(\lambda^1+\lambda^2)+B^\top(\eta^1+\eta^2).
	\]
	The assumption of the lemma guarantees $\lambda^1+\lambda^2=0$.
	The sign condition on $\lambda^1$ and $\lambda^2$ yields $\lambda^1=\lambda^2=0$, 
	and $v\in Q^\circ$ follows.
	
	Due to $Q^\circ=C^\circ\cap(-C^\circ)$, polarization on both sides yields
	\[
		Q
		=
		Q^{\circ\circ}
		=
		(C^\circ\cap(-C^\circ))^\circ
		=
		\cl(C^{\circ\circ}-C^{\circ\circ})
		=
		\cl(C-C)
		=
		\cl(\Span C)
		=
		\Span C
	\]
	by the bipolar theorem,
	and this shows the claim.
\end{proof}
The next two lemmas are classical.
\begin{lemma}[{\cite[Lemma~16.1]{NocedalWright2006}}]
	\label{lem:saddle_point_system}
	Consider the saddle-point matrix
	\begin{equation*}
		C :=
		\begin{bmatrix}
			A & B^\top \\ B & 0
		\end{bmatrix}
		,
	\end{equation*}
	where $A$ and $B$ are matrices of compatible sizes.
	If the constraint block $B$ is surjective, i.e., the rows of $B$ are linearly independent,
	and if $A$ is positive definite on the kernel of $B$,
	i.e., $x^\top A \, x > 0$ for all $x \in \ker(B) \setminus \{0\}$,
	then $C$ is invertible.
\end{lemma}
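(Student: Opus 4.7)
The plan is to verify invertibility of the square matrix $C$ by showing its kernel is trivial. I would suppose that $C \begin{bmatrix} x \\ y \end{bmatrix} = 0$ for some pair $(x,y)$, which amounts to the two block equations $Ax + B^\top y = 0$ and $Bx = 0$, and then argue $x = 0$ and $y = 0$ in turn.

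The first step is to get rid of $x$. I would left-multiply the equation $Ax + B^\top y = 0$ by $x^\top$. Using $Bx = 0$, the cross term $x^\top B^\top y = (Bx)^\top y$ vanishes, leaving $x^\top A x = 0$. Since $x$ lies in $\ker(B)$ and $A$ is positive definite on this subspace, this forces $x = 0$.

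With $x = 0$ in hand, the first block equation collapses to $B^\top y = 0$. The assumption that the rows of $B$ are linearly independent means exactly that $B^\top$ has trivial kernel (its columns are independent), so $y = 0$. This gives $\ker(C) = \{0\}$, and since $C$ is a square matrix, it is invertible.

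I do not expect any real obstacle here; the argument is a textbook energy/null-space manipulation. The only point worth stating carefully is the translation between "$B$ surjective", "rows of $B$ linearly independent", and "$B^\top$ injective", which are equivalent for a matrix but should be spelled out so that the last step is unambiguous.
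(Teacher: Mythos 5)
Your proof is correct and is exactly the standard kernel/energy argument; the paper does not reproduce a proof but simply cites this as Lemma~16.1 of Nocedal and Wright, where the same argument appears. No gaps.
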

\begin{lemma}[{\cite[p.~31]{Kato1995}}]
	\label{lem:neumann_series}
	Let the matrix $A$ be invertible.
	Then there exist constants $\varepsilon > 0$ and $C > 0$ such that
	\begin{equation*}
		\norm{(A + \delta A)^{-1}}
		\le
		C
	\end{equation*}
	holds for all matrices 
	$\delta A$ with $\norm{\delta A} \le \varepsilon$.
\end{lemma}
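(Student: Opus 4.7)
The plan is to use the classical Neumann series expansion. I would write $A + \delta A = A(I + A^{-1}\delta A)$, so that as soon as $I + A^{-1}\delta A$ is invertible, the identity $(A+\delta A)^{-1} = (I + A^{-1}\delta A)^{-1} A^{-1}$ holds and bounding $\|(A+\delta A)^{-1}\|$ reduces to bounding $\|(I + A^{-1}\delta A)^{-1}\|$.

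The key step is to choose $\varepsilon$ small enough so that $\|A^{-1}\delta A\|$ is bounded away from $1$. Concretely, I would set $\varepsilon := \frac{1}{2\,\|A^{-1}\|}$; then for any $\delta A$ with $\|\delta A\|\le\varepsilon$ the submultiplicativity of the operator norm gives $\|A^{-1}\delta A\|\le\|A^{-1}\|\,\|\delta A\|\le\tfrac12<1$.

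Under this bound, the Neumann series $\sum_{k=0}^\infty (-A^{-1}\delta A)^k$ converges absolutely in operator norm to a matrix which is easily checked to be the inverse of $I+A^{-1}\delta A$. A geometric-series estimate yields
\begin{equation*}
	\bigl\|(I + A^{-1}\delta A)^{-1}\bigr\|
	\le
	\sum_{k=0}^\infty \|A^{-1}\delta A\|^k
	\le
	\frac{1}{1-\tfrac12}
	=
	2.
\end{equation*}
Combining this with the factorization above and submultiplicativity once more, I obtain
\begin{equation*}
	\bigl\|(A+\delta A)^{-1}\bigr\|
	=
	\bigl\|(I+A^{-1}\delta A)^{-1}A^{-1}\bigr\|
	\le
	2\,\|A^{-1}\|
	=:
	C,
\end{equation*}
which establishes the claim with the constants $\varepsilon$ and $C$ defined above.

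There is no serious obstacle here; the only thing to be careful about is to ensure the factor $\|A^{-1}\delta A\|$ is uniformly bounded below $1$ on the whole ball $\{\delta A : \|\delta A\|\le\varepsilon\}$, which is why the explicit choice $\varepsilon = 1/(2\|A^{-1}\|)$ (or any other value strictly smaller than $1/\|A^{-1}\|$) is convenient.
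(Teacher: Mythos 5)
Your proof is correct and is exactly the classical Neumann-series argument; the paper does not reprove the lemma but simply cites Kato, and the label \texttt{lem:neumann\_series} confirms that the intended justification is the same one you give.
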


\subsection{MPCCs}

Here, we briefly summarize the well-known necessary essentials on stationarity conditions, 
constraint qualifications, and second-order conditions for complementarity-constrained 
optimization problems.
As mentioned earlier, most of the standard constraint qualifications do not hold at the
feasible points of \eqref{eq:MPCC} which is why stationarity notions, weaker than the
KKT conditions, have been introduced. Let us recall some of them.
For that purpose, we first introduce the MPCC-tailored Lagrangian 
$\LL\colon\R^n\times\R^\ell\times\R^m\times\R^p\times\R^p\to\R$ associated with
\eqref{eq:MPCC} via
\begin{align*}
	\LL(x,\lambda,\eta,\mu,\nu)
	:=
	f(x)+\lambda^\top g(x)+\eta^\top h(x)+\mu^\top G(x)+\nu^\top H(x).
\end{align*}
Furthermore, for a feasible point $\bar x\in\R^n$ of \eqref{eq:MPCC}, we will make use
of the index sets
\begin{align*}
	I^g(\bar x)		&:=\{i\in I^\ell\,|\,g_i(\bar x)=0\},\\
	I^{+0}(\bar x)	&:=\{i\in I^p\,|\,G_i(\bar x)>0\,\land\,H_i(\bar x)=0\},\\
	I^{0+}(\bar x)	&:=\{i\in I^p\,|\,G_i(\bar x)=0\,\land\,H_i(\bar x)>0\},\\
	I^{00}(\bar x)	&:=\{i\in I^p\,|\,G_i(\bar x)=0\,\land\,H_i(\bar x)=0\}.
\end{align*}
Clearly, $\{I^{+0}(\bar x),I^{0+}(\bar x),I^{00}(\bar x)\}$ is a disjoint partition of $I^p$.
\begin{definition}\label{def:stationarity_systems}
	Let $\bar x\in\R^n$ be a feasible point of \eqref{eq:MPCC}.
	Then $\bar x$ is said to be
	\begin{enumerate}
		\item Mordukhovich-stationary (M-stationary) if there exist multipliers
			$\lambda\in\R^\ell$, $\eta\in\R^m$, and $\mu,\nu\in\R^p$ which solve the system
			\begin{subequations}\label{eq:M_St}
				\begin{align}
					\label{eq:M_St_x}
						&\nabla_x\LL(\bar x,\lambda,\eta,\mu,\nu)=0,\\
					\label{eq:M_St_lambda}
						&\lambda_{I^g(\bar x)}\geq 0,\quad\lambda_{I^\ell\setminus I^g(\bar x)}=0,\\
					\label{eq:M_St_mu}
						&\mu_{I^{+0}(\bar x)}=0,\\
					\label{eq:M_St_nu}
						&\nu_{I^{0+}(\bar x)}=0,\\
					\label{eq:M_St_I00}
						&\forall i\in I^{00}(\bar x)\colon\,
						 \mu_i\nu_i=0\,\lor\,(\mu_i<0\,\land\,\nu_i<0),
				\end{align}
			\end{subequations}
		\item strongly stationary (S-stationary) if there exist multipliers
			$\lambda\in\R^\ell$, $\eta\in\R^m$, and $\mu,\nu\in\R^p$ which satisfy
			\eqref{eq:M_St_x}-\eqref{eq:M_St_nu} and
			\begin{equation}\label{eq:S_St}
				\mu_{I^{00}(\bar x)}\leq 0,\quad\nu_{I^{00}(\bar x)}\leq 0.
			\end{equation}
	\end{enumerate}
\end{definition}

Let us briefly note that there exist several more stationarity notions which apply to
\eqref{eq:MPCC}, see e.g.\ \cite{Ye2005} for an overview.
For later use, let $\Lambda^\textup{M}(\bar x)$ and $\Lambda^\textup{S}(\bar x)$ be the sets 
of all multipliers which solve the system of M- and S-stationarity w.r.t.\ a 
feasible point $\bar x\in\R^n$ of \eqref{eq:MPCC}, respectively.

In this paper, we will make use of a popular MPCC-tailored version of the Linear Independence
Constraint Qualification.
\begin{definition}\label{def:MPCC_LICQ}
	Let $\bar x\in\R^n$ be a feasible point of \eqref{eq:MPCC}. 
	Then the MPCC-tailored Linear Independence Constraint Qualification (MPCC-LICQ)
	is said to hold at $\bar x$ whenever the matrix
	\begin{equation*}
		\begin{bmatrix}
			g'(\bar x)_{I^g(\bar x)}
			\\
			h'(\bar x)
			\\
			G'(\bar x)_{I^{0+}(\bar x)\cup I^{00}(\bar x)}
			\\
			H'(\bar x)_{I^{+0}(\bar x)\cup I^{00}(\bar x)}
		\end{bmatrix}
	\end{equation*}
	possesses full row rank.
\end{definition}

It is a classical result that a local minimizer of \eqref{eq:MPCC} where MPCC-LICQ holds is
S-stationary. Furthermore, the associated multipliers $(\lambda,\eta,\mu,\nu)$, which
solve the system \eqref{eq:M_St_x}-\eqref{eq:M_St_nu}, \eqref{eq:S_St} are uniquely
determined in this case. 
It has been reported in \cite{FlegelKanzow2006:1}
that even under validity
of mild MPCC-tailored constraint qualifications, local minimizers of
\eqref{eq:MPCC} are M-stationary. Therefore, it is a reasonable strategy to identify the
M-stationary points of a given complementarity-constrained optimization problem in order
to tackle the problem of interest.

We review existing second-order optimality conditions addressing \eqref{eq:MPCC}
which are based on S-stationary points.
We adapt the considerations from \cite{ScheelScholtes2000}.
For some point $\bar x\in\R^n$, we first introduce the so-called MPCC-critical cone
\[
	\mathcal C(\bar x)
	:=
	\left\{
		\delta x\in\R^n
		\,\middle|\,
			\begin{aligned}
				\nabla f(\bar x)^\top\delta x&\leq 0&\,&\\
				\nabla g_i(\bar x)^\top\delta x&\leq 0&\,&i\in I^g(\bar x)\\
				h'(\bar x)\delta x&=0&&\\
				\nabla G_i(\bar x)^\top\delta x&=0&&i\in I^{0+}(\bar x)\\
				\nabla H_i(\bar x)^\top\delta x&=0&&i\in I^{+0}(\bar x)\\
				\nabla G_i(\bar x)^\top\delta x&\geq 0&&i\in I^{00}(\bar x)\\
				\nabla H_i(\bar x)^\top\delta x&\geq 0&&i\in I^{00}(\bar x)\\
				(\nabla G_i(\bar x)^\top\delta x)(\nabla H_i(\bar x)^\top\delta x)&=0&&i\in I^{00}(\bar x)
			\end{aligned}
	\right\}.
\]
We note that this cone is likely to be not convex if the index set $I^{00}(\bar x)$ of biactive 
complementarity constraints
is nonempty. In case where $\bar x$ is an S-stationary point of \eqref{eq:MPCC} 
and $(\lambda,\eta,\mu,\nu)\in\Lambda^\textup{S}(\bar x)$ is arbitrarily chosen, we obtain the representation
\[
	\mathcal C(\bar x)
	=
	\left\{
		\delta x\in\R^n
		\,\middle|\,
			\begin{aligned}
				\nabla g_i(\bar x)^\top\delta x&= 0&\,&i\in I^g(\bar x),\lambda_i>0\\
				\nabla g_i(\bar x)^\top\delta x&\leq 0&\,&i\in I^g(\bar x),\lambda_i=0\\
				h'(\bar x)\delta x&=0&&\\
				\nabla G_i(\bar x)^\top\delta x&=0&&i\in I^{0+}(\bar x)\cup I^{00}_{\pm\R}(\bar x,\mu,\nu)\\
				\nabla H_i(\bar x)^\top\delta x&=0&&i\in I^{+0}(\bar x)\cup I^{00}_{\R\pm}(\bar x,\mu,\nu)\\
				\nabla G_i(\bar x)^\top\delta x&\geq 0&&i\in I^{00}_{00}(\bar x,\mu,\nu)\\
				\nabla H_i(\bar x)^\top\delta x&\geq 0&&i\in I^{00}_{00}(\bar x,\mu,\nu)\\
				(\nabla G_i(\bar x)^\top\delta x)(\nabla H_i(\bar x)^\top\delta x)&=0&&i\in I^{00}_{00}(\bar x,\mu,\nu) 
			\end{aligned}
	\right\}
\]
by elementary calculations, see \cite[Lemma~4.1]{Mehlitz2019b} as well, where we used
\begin{subequations}
	\label{eq:more_index_sets}
	\begin{align}
		I^{00}_{\pm\R}(\bar x,\mu,\nu)&:=\{i\in I^{00}(\bar x)\,|\,\mu_j\neq 0\},\\
		I^{00}_{\R\pm}(\bar x,\mu,\nu)&:=\{i\in I^{00}(\bar x)\,|\,\nu_j\neq 0\},\\
		I^{00}_{00}(\bar x,\mu,\nu)&:=\{i\in I^{00}(\bar x)\,|\,\mu_j=\nu_j=0\}.
	\end{align}
\end{subequations}
If $\bar x\in\R^n$ is a local minimizer of \eqref{eq:MPCC} where MPCC-LICQ holds, then the unique
multiplier $(\lambda,\eta,\mu,\nu)\in\Lambda^\textup{S}(\bar x)$ satisfies
\[
	\forall \delta x\in\mathcal C(\bar x)\colon
	\quad
	\delta x^\top\nabla^2_{xx}\LL(\bar x,\lambda,\eta,\mu,\nu)\delta x\geq 0.
\]
Let us note that necessary second-order conditions for \eqref{eq:MPCC} which are based on 
M-stationary points can be found in \cite{GuoLinYe2013}.
On the other hand, if $\bar x$ is an arbitrary S-stationary point of \eqref{eq:MPCC} where the
so-called MPCC-tailored Second-Order Sufficient Condition (MPCC-SOSC) given by
\[
	\forall\delta x\in\mathcal C(\bar x)\setminus\{0\}\,
	\exists (\lambda,\eta,\mu,\nu)\in\Lambda^\textup{S}(\bar x)\colon
	\qquad
	\delta x^\top\nabla^2_{xx}\LL(\bar x,\lambda,\eta,\mu,\nu)\delta x>0
\]
holds, then $\bar x$ is a strict local minimizer of \eqref{eq:MPCC}.
More precisely, the second-order growth condition holds for \eqref{eq:MPCC} at $\bar x$.

Finally, we are going to state the second-order condition which we are going to exploit for
our convergence analysis. 
As we will see later, it generalizes a strong second-order condition (SSOC) exploited in order to
ensure local fast convergence of semismooth Newton-type methods for the numerical solution
of KKT systems associated with standard nonlinear programs, see
\cite[Section~5.2]{FacchineiFischerKanzowPeng1999} which is based on the theory from
\cite[Section~4]{Robinson1980}.
Observe that the subsequent definition is based on M-stationary points.
\begin{definition}\label{def:MPCC_SSC}
	Let $\bar x\in\R^n$ be an M-stationary point of \eqref{eq:MPCC}.
	Furthermore, let $(\lambda,\eta,\mu,\nu)\in\Lambda^\textup{M}(\bar x)$
	be fixed.
	Then the MPCC-tailored Strong Second-Order Condition (MPCC-SSOC)
	is said to hold at $\bar x$ w.r.t.\ $(\lambda,\eta,\mu,\nu)$ whenever
	\[
		\forall \delta x\in S(\bar x,\lambda,\mu,\nu)\setminus\{0\}\colon\quad
		\delta x^\top\nabla^2_{xx}\LL(\bar x,\lambda,\eta,\mu,\nu)\delta x
		>0
	\]
	holds true. 
	Here, the set $S(\bar x,\lambda,\mu,\nu) \subset \R^n$ is given by
	\[
		S(\bar x,\lambda,\mu,\nu)
		:=
		\left\{
			\delta x
			\,\middle|\,
			\begin{aligned}
				\nabla g_i(\bar x)^\top\delta x&=0&&i\in I^g(\bar x),\,\lambda_i>0\\
				h'(\bar x)\delta x&=0&&\\
				\nabla G_i(\bar x)^\top\delta x&=0&&i\in I^{0+}(\bar x)\cup I^{00}_{\pm\R}(\bar x,\mu,\nu)\\
				\nabla H_i(\bar x)^\top\delta x&=0&&i\in I^{+0}(\bar x)\cup I^{00}_{\R\pm}(\bar x,\mu,\nu)\\
				(\nabla G_i(\bar x)^\top\delta x)(\nabla H_i(\bar x)^\top\delta x)&=0&&i\in I^{00}_{00}(\bar x,\mu,\nu)
			\end{aligned}
		\right\}.
	\]
\end{definition}

Fix a feasible point $\bar x\in\R^n$ of \eqref{eq:MPCC} which is M-stationary and let
an associated multiplier
$(\lambda,\eta,\mu,\nu)\in\Lambda^\textup{M}(\bar x)$ be given.
Let us clarify that validity of MPCC-SSOC at $\bar x$ does \emph{not} provide a 
sufficient optimality condition for
\eqref{eq:MPCC} in general since M-stationarity
does not rule out the presence of descent directions at the underlying point of interest.
Furthermore, the set $S(\bar x,\lambda,\mu,\nu)$ seems to be too large for the purpose of
being used in order to derive second-order necessary optimality conditions of \eqref{eq:MPCC}
based on M-stationary points, see \cite[Section~3]{GuoLinYe2013}.
For any set $\beta\subset I^{00}_{00}(\bar x,\mu,\nu)$,
we define the complement
$\bar\beta:=I^{00}_{00}(\bar x,\mu,\nu) \setminus \beta$ as well as
\[
	S_\beta(\bar x,\lambda,\mu,\nu)
	:=
	\left\{
			\delta x\in\R^n
			\,\middle|\,
			\begin{aligned}
				\nabla g_i(\bar x)^\top\delta x&=0&\;&i\in I^g(\bar x),\,\lambda_i>0\\
				h'(\bar x)\delta x&=0&&\\
				\nabla G_i(\bar x)^\top\delta x&=0&&i\in I^{0+}(\bar x)\cup I^{00}_{\pm\R}(\bar x,\mu,\nu)\cup\beta\\
				\nabla H_i(\bar x)^\top\delta x&=0&&i\in I^{+0}(\bar x)\cup I^{00}_{\R\pm}(\bar x,\mu,\nu)\cup\bar\beta
			\end{aligned}
		\right\}.
\]
Then we have 
\[
	S(\bar x,\lambda,\mu,\nu)=\bigcup\limits_{\beta\subset I^{00}_{00}(\bar x,\mu,\nu)}S_\beta(\bar x,\lambda,\mu,\nu).
\]
Thus, MPCC-SSOC holds at $\bar x$ w.r.t.\ $(\lambda,\eta,\mu,\nu)\in\Lambda^\textup{M}(\bar x)$
if and only if the classical SSOC 
from \cite{FacchineiFischerKanzowPeng1999,Robinson1980} is valid at $\bar x$ along all the
NLP branches of \eqref{eq:MPCC} given by
\begin{equation*}
	\begin{aligned}
		\min_{x} \quad& f(x)
		\\
		\text{s.t.}\quad&
		\begin{aligned}[t]
			g(x)	&\leq 	0,
			&
			h(x)	&=		0,
			&
			&&
			\\
			G_i(x) 	&\geq	0,
			&
			H_i(x) 	&= 	0,
			&
			&
			i\in I^{+0}(\bar x)\cup I^{00}_{\R\pm}(\bar x,\mu,\nu)\cup\bar\beta
			&
			\\
			G_i(x)	&=	0,
			&
			H_i(x)	&\geq	0,
			&
			&
			i\in I^{0+}(\bar x)\cup I^{00}_{\pm\R}(\bar x,\mu,\nu)\cup\beta
			&
		\end{aligned}
	\end{aligned}
\end{equation*}
for $\beta\subset I^{00}_{00}(\bar x,\mu,\nu)$. Therefore, MPCC-SSOC provides
a reasonable generalization of the SSOC to \eqref{eq:MPCC}.
Furthermore, due to \cref{lem:span_of_polyhedral_cone}, under validity of MPCC-LICQ at $\bar x$, we have 
$S_\beta(\bar x,\lambda,\mu,\nu)=\Span \mathcal C_\beta(\bar x,\lambda,\mu,\nu)$ where we used
\[
	\mathcal C_\beta(\bar x,\lambda,\mu,\nu)
	:=
	\left\{
		\delta x\in\R^n
		\,\middle|\,
			\begin{aligned}
				\nabla g_i(\bar x)^\top\delta x&= 0&\,&i\in I^g(\bar x),\lambda_i>0\\
				\nabla g_i(\bar x)^\top\delta x&\leq 0&\,&i\in I^g(\bar x),\lambda_i=0\\
				h'(\bar x)\delta x&=0&&\\
				\nabla G_i(\bar x)^\top\delta x&=0&&i\in I^{0+}(\bar x)\cup I^{00}_{\pm\R}(\bar x,\mu,\nu)\cup\beta\\
				\nabla H_i(\bar x)^\top\delta x&=0&&i\in I^{+0}(\bar x)\cup I^{00}_{\R\pm}(\bar x,\mu,\nu)\cup\bar\beta\\
				\nabla G_i(\bar x)^\top\delta x&\geq 0&&i\in \bar\beta\\
				\nabla H_i(\bar x)^\top\delta x&\geq 0&&i\in \beta
			\end{aligned}
	\right\},
\]
i.e., $S(\bar x,\lambda,\mu,\nu)$ is the finite union of the spans of polyhedral cones.
Observing that
\[
	\mathcal C(\bar x)
	=
	\bigcup\limits_{\beta\subset I^{00}_{00}(\bar x,\mu,\nu)}\mathcal C_\beta(\bar x,\lambda,\mu,\nu)
\]
holds true provided $\bar x$ is an S-stationary point while $(\lambda,\eta,\mu,\nu)\in\Lambda^\textup{S}(\bar x)$
holds, the set $S(\bar x,\lambda,\mu,\nu)$ is closely related to the critical cone of \eqref{eq:MPCC}.
Again, this underlines that the name MPCC-SSOC in \cref{def:MPCC_SSC} is quite reasonable.
Further observe that the inclusion $\mathcal C(\bar x)\subset S(\bar x,\lambda,\mu,\nu)$
holds for each S-stationary point $\bar x$ and each multiplier $(\lambda,\eta,\mu,\nu)\in\Lambda^\textup{S}(\bar x)$,
i.e., MPCC-SSOC is slightly stronger than MPCC-SOSC in this situation.

\subsection{Semismooth Newton methods}\label{sec:nonsmooth_Newton}
In this section,
we collect some theory
concerning the application of Newton methods
for functions $F \colon \R^n \to \R^n$ which are not continuously differentiable.
In the finite-dimensional case,
one typically utilizes semismooth functions.
Since semismooth functions are by definition locally Lipschitz continuous,
this theory is not applicable to discontinuous functions.
Hence,
we exploit the concept of Newton differentiability,
which is used in infinite-dimensional applications of Newton's method,
see \cite{ChenNashedQi2000,HintermuellerItoKunisch2002,Ulbrich2002,ItoKunisch2008}.
\begin{definition}
	\label{def:newton_differentiability}
	Let $F \colon \R^n \to \R^m$
	and $DF \colon \R^n \to \R^{m \times n}$ be given.
	The function $F$ is said to be Newton differentiable
	(with derivative $DF$)
	on a set
	$K \subset \R^n$ if
	\begin{equation*}
		F(x + h) - F(x) - DF(x + h)\,h = \oo(\norm{h})
		\qquad\text{for } h \to 0
	\end{equation*}
	holds for all $x \in K$.
	For $\alpha \in (0,1]$, the function $F$ is Newton differentiable
	of order $\alpha$, if
	\begin{equation*}
		F(x + h) - F(x) - DF(x + h)\,h = \OO(\norm{h}^{1+\alpha})
		\qquad\text{for } h \to 0
	\end{equation*}
	holds for all $x \in K$.
	Finally, $F$ is said to be Newton differentiable of order $\infty$,
	if for all $x \in K$ there is $\varepsilon_x > 0$
	such that
	\begin{equation*}
		\forall h \in B_{\varepsilon_x}(0)\colon\quad
		F(x + h) - F(x) - DF(x + h)\,h = 0.
	\end{equation*}
\end{definition}
Clearly,
if $F$ is continuously differentiable,
then $DF = F'$ is a Newton derivative.
If $F'$ is locally Lipschitz continuous, then $F$ is Newton differentiable of order 1.

In the following example, we discuss the Newton differentiability of
the minimum and maximum entry of a vector.
Note that this particular choice for the Newton derivative
will be essential for our argumentation in the later parts of this paper.
\begin{example}\label{ex:Newton_differentiability_of_min_max}
For the nonsmooth functions
$\min,\max\colon\R^n\to\R$, we establish the following convention
for choosing Newton derivatives at arbitrary points $a\in\R^n$:
\begin{equation}\label{eq:newton_derivative_min_max}
	\begin{aligned}
		D\min(a_1,\ldots,a_n) &:=
		e_i^\top,\quad
		i = \min\set[\big]{j\in\set{1,\ldots,n}\given a_j=\min(a_1,\ldots,a_n)},
		\\
		D\max(a_1,\ldots,a_n) &:=
		e_i^\top,\quad
		i = \min\set[\big]{j\in\set{1,\ldots,n}\given a_j=\max(a_1,\ldots,a_n)},
	\end{aligned}
\end{equation}
i.e., we give priority to variables that appear first in a $\min$
or $\max$ expression.
\ifpreprint
Let us verify that this choice ensures that $\min$ is 
indeed Newton differentiable of order $\infty$. Similar arguments apply in order
to show the same properties of $\max$.

For arbitrary $a\in\R^n$, we introduce $I(a):=\{j\in\{1,\ldots,n\}\,|\,a_j=\min(a_1,\ldots,a_n)\}$.
Let $i_0\in I(a)$ be fixed.
By definition of the minimum there is some $\varepsilon>0$ such that we have
\begin{equation*}
	\forall h\in\R^n\colon\quad
	\norm{h}<\varepsilon
	\,\Longrightarrow\,
	\min(a_1+h_1,\ldots,a_n+h_n)=\min\{a_j+h_j\,|\,j\in I(a)\}.
\end{equation*}
Thus, for each $h\in\R^n$ satisfying $\norm{h}<\varepsilon$, it holds 
$D\min(a_1+h_1,\ldots,a_n+h_n)=e_{i_h}^\top$ where $i_h\in\{1,\ldots,n\}$ satisfies
\begin{align*}
	i_h
	&
	=
	\min\{i\in\{1,\ldots,n\}\,|\,a_i+h_i=\min(a_1+h_1,\ldots,a_n+h_n)\}\\
	&
	=
	\min\{i\in I(a)\,|\,a_i+h_i=\min\{a_j+h_j\,|\,j\in I(a)\}\}\\
	&
	=
	\min\{i\in I(a)\,|\,a_{i_0}+h_i=\min\{a_{i_0}+h_j\,|\,j\in I(a)\}\}\\
	&
	=
	\min\{i\in I(a)\,|\,h_i=\min\{h_j\,|\,j\in I(a)\}\}
\end{align*}
due to the above priority rule.
On the other hand, for the same $h$, it holds
\begin{align*}
	\min(a_1+h_1,\ldots,a_n+h_n)-\min(a_1,\ldots,a_n)
	&
	=
	\min\{a_j+h_j\,|\,j\in I(a)\}-a_{i_0}\\
	&
	=
	\min\{h_j\,|\,j\in I(a)\}\\
	&
	=
	D\min(a_1+h_1,\ldots,a_n+h_n)\,h
\end{align*}
due to the above arguments, i.e., $\min$ is indeed Newton differentiable of order $\infty$.
\else
	With this choice, the functions $\min$ and $\max$
	are Newton differentiable of order $\infty$.
\fi
\end{example}

In order to find a solution $\bar x$ of $F(\bar x) = 0$ where $F\colon\R^n\to\R^n$
is a Newton differentiable map,
we use the iteration
\begin{equation}\label{eq:Newton_method}
	x_{k+1} := x_k - DF(x_k)^{-1} F(x_k)
	,\qquad
	k = 0,1,\ldots
\end{equation}
for an initial guess $x_0\in\R^n$.
As usual, we call this iteration scheme \emph{semismooth Newton method} but
emphasize that it applies to mappings $F$ which are not semismooth
in the classical sense.

Nowadays, the proof of the next theorem is classical,
see, e.g., \cite[proof of Theorem~3.4]{ChenNashedQi2000}.
\begin{theorem}
	\label{thm:newton_method}
	Assume that $F \colon \R^n \to \R^n$ is Newton differentiable on $K \subset \R^n$
	with Newton derivative $DF$.
	Further assume that $\bar x \in K$ satisfies $F(\bar x) = 0$
	and that the matrices $\set{DF(x) \given x \in B_\varepsilon(\bar x)}$
	are uniformly invertible for some $\varepsilon > 0$.
	Then there exists $\delta > 0$ such that the
	Newton-type iteration \eqref{eq:Newton_method} is well defined for any
	initial iterate $x_0 \in B_\delta(\bar x)$ while the 
	associated sequence of iterates converges superlinearly.
	If $F$ is additionally Newton differentiable of order $1$,
	the convergence is quadratic,
	and we have convergence in one step if $F$ is Newton differentiable of order $\infty$.
\end{theorem}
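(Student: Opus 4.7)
The plan is to run the classical Newton contraction argument, adapted to the Newton-differentiable setting. The two ingredients I would combine are: the uniform bound $\norm{DF(x)^{-1}} \le C$ for all $x \in B_\varepsilon(\bar x)$, furnished directly by the uniform invertibility hypothesis, and the defining residual of Newton differentiability. Using $F(\bar x) = 0$, the latter reads
\begin{equation*}
	F(\bar x + h) - DF(\bar x + h)\,h = \oo(\norm{h}) \qquad \text{as } h \to 0.
\end{equation*}

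First I would rewrite a single Newton step. For an iterate $x_k = \bar x + h_k$ lying in $B_\varepsilon(\bar x)$, I get
\begin{equation*}
	x_{k+1} - \bar x = h_k - DF(x_k)^{-1} F(x_k) = DF(x_k)^{-1} \bigl[DF(\bar x + h_k)\,h_k - F(\bar x + h_k)\bigr],
\end{equation*}
so the two ingredients yield $\norm{x_{k+1} - \bar x} \le C \cdot \oo(\norm{h_k})$. Next I would pick $\delta \in (0,\varepsilon]$ small enough that the $\oo(\norm{h})$ residual is bounded by $\norm{h}/(2C)$ whenever $\norm{h} \le \delta$; such a $\delta$ exists since the Newton differentiability estimate holds at $\bar x$. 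Then for any $x_0 \in B_\delta(\bar x)$, an induction shows $\norm{x_{k+1} - \bar x} \le \tfrac{1}{2} \norm{x_k - \bar x}$, so every iterate stays inside $B_\delta(\bar x) \subset B_\varepsilon(\bar x)$, the Newton step is well defined at every step, and the iterates converge to $\bar x$. The superlinear rate then drops out by rereading the very same estimate as $\norm{x_{k+1} - \bar x} = \oo(\norm{x_k - \bar x})$.

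For the sharper rates, I would plug the stronger residuals from \cref{def:newton_differentiability} into the same one-step identity. Order $\alpha \in (0,1]$ gives $\norm{x_{k+1} - \bar x} \le C \cdot \OO(\norm{x_k - \bar x}^{1+\alpha})$, hence quadratic convergence in the case $\alpha = 1$, while order $\infty$ turns the residual into an exact zero for $h_k$ in a fixed neighbourhood, yielding $x_{k+1} = \bar x$ in one step once the iterate enters that neighbourhood. There is no genuine obstacle in the argument; the only subtlety worth highlighting is that Newton differentiability evaluates the derivative at the \emph{perturbed} point $\bar x + h$ rather than at $\bar x$, and it is precisely this feature that makes the residual pair cleanly with the factor $DF(x_k)^{-1}$ in the iteration.
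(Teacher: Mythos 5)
Your proof is correct and takes essentially the same route as the paper's, resting on the one-step identity $x_{k+1} - \bar x = -DF(x_k)^{-1}\bigl[F(x_k) - F(\bar x) - DF(x_k)\,(x_k - \bar x)\bigr]$ combined with the uniform inverse bound and the Newton-differentiability residual to get contraction, superlinear convergence, and the sharper rates. The only minor point the paper makes explicit is that $\delta$ may need to be reduced once more in the order-$\infty$ case so that a single step already lands on $\bar x$, which you acknowledge with your ``fixed neighbourhood'' remark.
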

\begin{proof}
	We choose $\delta > 0$ and $M > 0$
	such that
	\begin{equation*}
		\norm{DF(x)^{-1}} \le M
		\quad\text{and}\quad
		\norm{ F(x) - F(\bar x) - DF(x)\,(x - \bar x)} \le \frac1{2\,M} \, \norm{ x - \bar x}
	\end{equation*}
	hold for all $x \in B_\delta(\bar x)$.
	For $x_k \in B_\delta(\bar x)$, we obtain
	\begin{equation}
		\label{eq:newton_proof}
		\begin{aligned}
			x_{k+1} - \bar x
			&=
			x_k - DF(x_k)^{-1} F(x_k)
			- \bar x
			\\
			&=
			-
			DF(x_k)^{-1} \,
			\paren[\big]{ F(x_k) - F(\bar x) - DF(x_k)\,(x_k - \bar x) }
			.
		\end{aligned}
	\end{equation}
	Thus, $\norm{x_{k+1} - \bar x} \le \norm{x_k - \bar x}/2$ follows.
	This shows that the iteration is well defined
	for any $x_0 \in B_\delta(\bar x)$
	and $x_k \to \bar x$.
	Now, \eqref{eq:newton_proof} together with
	the required order of
	Newton differentiability of $F$
	implies
	\begin{align*}
		\norm{x_{k+1} - \bar x}
		&\le M \, \norm{ F(x_k) - F(\bar x) - DF(x_k)\,(x_k - \bar x) }
		\\&
		=
		\begin{cases}
			\OO(\norm{x_k - \bar x}^2) & \text{differentiability of order $1$}, \\
			0 & \text{differentiability of order $\infty$}, \\
			\oo(\norm{x_k - \bar x}) & \text{else},
		\end{cases}
	\end{align*}
	where $\delta$ may need to be reduced in the case where
	the order of Newton differentiability equals $\infty$.
	This shows the claim.
\end{proof}
If the assumptions of \cref{thm:newton_method} are satisfied,
one obtains the equivalence of the known residuum $\norm{F(x)}$
and the unknown distance $\norm{x - \bar x}$.
\begin{corollary}
	\label{lem:norm_equivalence}
	In addition to the assumptions of \cref{thm:newton_method},
	suppose that the matrices $\set{DF(x) \given x \in B_\varepsilon(\bar x)}$ are bounded
	for some $\varepsilon > 0$.
	Then there exist constants $c, C, \delta > 0$ such that
	\begin{equation*}
		\forall x \in B_\delta(\bar x)\colon\quad
		c \, \norm{F(x)}
		\le
		\norm{x - \bar x}
		\le
		C \, \norm{F(x)}.
	\end{equation*}
\end{corollary}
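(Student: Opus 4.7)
The plan is to combine the Newton differentiability identity at $\bar x$ with invertibility for one inequality and with boundedness of $DF$ for the other. Since $F(\bar x)=0$, the Newton differentiability hypothesis specialized to the base point $\bar x$ and increment $h=x-\bar x$ reads
\[
    F(x) - DF(x)\,(x-\bar x) = \oo(\norm{x - \bar x})
    \qquad\text{as } x \to \bar x,
\]
and this single estimate drives both bounds. I would fix $\delta_0 \in (0,\varepsilon)$ small enough that the $\oo$-term is bounded by $\tfrac12 M^{-1}\norm{x-\bar x}$ on $B_{\delta_0}(\bar x)$, where $M>0$ is the uniform bound on $\norm{DF(x)^{-1}}$ from the hypotheses of \cref{thm:newton_method}.

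For the upper bound on $\norm{x-\bar x}$ by $\norm{F(x)}$, I would multiply the identity above by $DF(x)^{-1}$ to write
\[
    x - \bar x = DF(x)^{-1}F(x) - DF(x)^{-1}\bigl(F(x) - DF(x)(x-\bar x)\bigr),
\]
take norms, use $\norm{DF(x)^{-1}}\le M$, and absorb the term $\tfrac12\norm{x-\bar x}$ on the right into the left. This yields $\norm{x - \bar x} \le 2M\,\norm{F(x)}$, giving $C := 2M$.

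For the lower bound, let $B>0$ be a uniform bound on $\norm{DF(x)}$ available from the additional assumption. Taking norms directly in the Newton differentiability identity, I would estimate
\[
    \norm{F(x)} \le \norm{DF(x)}\,\norm{x-\bar x} + \oo(\norm{x-\bar x}) \le (B+1)\,\norm{x-\bar x}
\]
for $x$ sufficiently close to $\bar x$, so $c := 1/(B+1)$ works. Choosing $\delta>0$ no larger than the two thresholds appearing in the two arguments (and at most $\varepsilon$) produces a single neighborhood on which both inequalities hold simultaneously.

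I do not foresee a genuine obstacle here: the whole argument is a direct consequence of the Newton differentiability relation at $\bar x$ together with the quantitative bounds $\norm{DF(x)^{-1}}\le M$ and $\norm{DF(x)}\le B$ on $B_\varepsilon(\bar x)$, much in the spirit of the first displayed estimate inside the proof of \cref{thm:newton_method}. The only minor point to be careful about is that the $\oo$-term is a function of $x\to\bar x$ (not of an abstract $h\to 0$ independent of a base point); this is exactly what the definition of Newton differentiability at $\bar x$ delivers, so the absorption step is legitimate.
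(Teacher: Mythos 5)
Your proof is correct and takes essentially the same route as the paper's: both start from the Newton differentiability identity $F(x) = DF(x)(x-\bar x) + \oo(\norm{x-\bar x})$ (using $F(\bar x)=0$) and then derive the two inequalities from the uniform bounds on $\norm{DF(x)^{-1}}$ and $\norm{DF(x)}$, shrinking $\delta$ so the remainder can be absorbed. The only cosmetic difference is that you multiply through by $DF(x)^{-1}$ for the upper bound, while the paper uses the equivalent estimate $\norm{DF(x)^{-1}}^{-1}\norm{x-\bar x}\le\norm{DF(x)(x-\bar x)}$ directly; the resulting constants are the same up to bookkeeping.
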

\ifpreprint
	\begin{proof}
		Due to the Newton differentiability of $F$, we have
		\begin{equation*}
			F(x)
			=
			F(x) - F(\bar x)
			=
			DF(x) \, (x - \bar x)
			+
			\oo(\norm{x - \bar x})
			\qquad\text{for } x \to \bar x.
		\end{equation*}
		Exploiting the properties of $DF$ and the postulated assumptions,
		there are $\delta>0$ and $C_1,C_2>0$ such that
		we have
		\begin{equation*}
			C_1\norm{x-\bar x}
			\le 
			\norm{DF(x)^{-1}}^{-1} \, \norm{x - \bar x}
			\le
			\norm{DF(x) \, (x - \bar x)}
			\le
			\norm{DF(x)} \, \norm{x - \bar x}
			\le 
			C_2\norm{x-\bar x}
		\end{equation*}
		as well as $\norm{\oo(\norm{x-\bar x})}\leq \tfrac{C_1}{2}\norm{x-\bar x}$
		for all $x\in B_\delta(\bar x)$.
		The claim follows choosing $c:=2/C_1$ and $C:=2/(C_1+2C_2)$.
	\end{proof}
\else
	The proof follows from
	\begin{equation*}
		F(x)
		=
		F(x) - F(\bar x)
		=
		DF(x) \, (x - \bar x)
		+
		\oo(\norm{x - \bar x})
	\end{equation*}
	and the properties of $DF(x)$.
\fi
For the reader's convenience, we provide the following chain rule.
\begin{lemma}
	\label{lem:chain_rule}
	Suppose that $f \colon \R^n \to \R^m$ is Newton differentiable
	on $K \subset \R^n$
	with derivative $Df$
	and that $g \colon \R^m \to \R^p$ is Newton differentiable
	on $f(K)$ with derivative $Dg$.
	Further, we assume that $Df$ is bounded on a neighborhood of $K$
	and that $Dg$ is bounded on a neighborhood of $f(K)$.
	Then $g \circ f$ is Newton differentiable on $K$
	with derivative given by
	$x \mapsto Dg(f(x)) \, Df(x)$.
	If both $f$ and $g$ are Newton differentiable of order 
	$\alpha \in (0,1] \cup \{\infty\}$,
	then $g \circ f$ is Newton differentiable of order $\alpha$.
\end{lemma}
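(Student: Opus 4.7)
The plan is to expand $g \circ f$ at $x$ by inserting $f$ into the Newton differentiability identity for $g$ and then substituting the Newton differentiability identity for $f$. Write $y := f(x)$ and, for small $h$, let $k := f(x+h) - f(x)$. I will abbreviate the two Newton remainders as
\begin{align*}
r_f(h) &:= f(x+h) - f(x) - Df(x+h)\,h, \\
r_g(k) &:= g(y+k) - g(y) - Dg(y+k)\,k.
\end{align*}
By the hypotheses, $r_f(h) = \oo(\norm{h})$ as $h \to 0$ and $r_g(k) = \oo(\norm{k})$ as $k \to 0$.

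First I would verify the preparatory fact that $f$ is continuous on $K$ and that $\norm{k} = \OO(\norm{h})$. Since $\norm{k} \le \norm{Df(x+h)}\,\norm{h} + \norm{r_f(h)}$ and $Df$ is bounded in a neighbourhood of $x$ by some constant $C_f$, we obtain $\norm{k} \le C_f\,\norm{h} + \oo(\norm{h})$, so in particular $k \to 0$ as $h \to 0$. Consequently, for $\norm{h}$ sufficiently small, $f(x+h)$ lies inside the neighbourhood of $f(K)$ on which $Dg$ is bounded by some $C_g$, so $\norm{Dg(f(x+h))} \le C_g$.

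The heart of the argument is the identity
\begin{equation*}
g(f(x+h)) - g(f(x)) - Dg(f(x+h))\,Df(x+h)\,h
=
Dg(f(x+h))\,r_f(h) + r_g(k),
\end{equation*}
obtained by writing $g(y+k) - g(y) = Dg(y+k)\,k + r_g(k)$ and substituting $k = Df(x+h)\,h + r_f(h)$. The first summand on the right is bounded by $C_g\,\oo(\norm{h}) = \oo(\norm{h})$, and the second by $\oo(\norm{k}) = \oo(\OO(\norm{h})) = \oo(\norm{h})$, which gives the zero-order Newton differentiability claim.

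For the order-$\alpha$ refinements the same identity is used. If both $f$ and $g$ are Newton differentiable of order $\alpha \in (0,1]$, then $\norm{r_f(h)} = \OO(\norm{h}^{1+\alpha})$ and $\norm{r_g(k)} = \OO(\norm{k}^{1+\alpha}) = \OO(\norm{h}^{1+\alpha})$, so the composition has order $\alpha$ as well. For $\alpha = \infty$, by definition there exist $\varepsilon_x, \varepsilon_{f(x)} > 0$ such that $r_f(h) = 0$ for $\norm{h} < \varepsilon_x$ and $r_g(k) = 0$ for $\norm{k} < \varepsilon_{f(x)}$; choosing $\norm{h} < \min\{\varepsilon_x,\, \varepsilon_{f(x)} / C_f\}$ ensures both remainders vanish. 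There is no real obstacle in this proof beyond keeping the Landau bookkeeping honest; the one point that must be handled explicitly is the near-continuity of $f$ at points of $K$, since Newton differentiability alone does not guarantee this and we need it to invoke the boundedness hypothesis on $Dg$ at $f(x+h)$.
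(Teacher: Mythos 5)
Your proof is correct and takes the same route as the paper's: both define the remainder terms $r_f$ and $r_g$, establish $\norm{f(x+h)-f(x)} = \OO(\norm{h})$ from the boundedness of $Df$, decompose the composite remainder into $Dg(f(x+h))\,r_f + r_g$, and then bound each piece using the boundedness of $Dg$ near $f(K)$. (In the order-$\alpha$ case you correctly write $\OO(\norm{h}^{1+\alpha})$, whereas the paper has a typo with exponent $\alpha$; and your remark about needing $f(x+h)\to f(x)$ to invoke the bound on $Dg$ is precisely the point the paper makes explicit as well.)
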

\begin{proof}
	We follow the proof of \cite[Theorem~9.3]{Clason2017}.
	We define the remainder term $r_f$ of $f$ via
	\begin{equation*}
		r_f(x;h) := f(x + h) - f(x) - Df(x + h) \, h.
	\end{equation*}
	Similarly, we define $r_g$ and $r_{g\circ f}$.
	The Newton differentiability of $f$ together with the boundedness of $Df$
	implies
	\begin{equation*}
		f(x + h) - f(x)
		=
		Df(x + h) \, h + \oo(\norm{h})
		=
		\OO(\norm{h})
		\qquad\text{as } h \to 0.
	\end{equation*}
	In particular, $f(x+h) - f(x) \to 0$ as $h \to 0$.
	Next, we have
	\begin{align*}
		r_{g\circ f}(x; h)
		&=
		g(f(x+h)) - g(f(x)) - Dg(f(x + h)) \, Df(x + h) \, h
		\\
		&=
		r_g( f(x); f(x+h) - f(x)) \\
		&\qquad+ Dg(f(x+h)) \, \paren{f(x+h) - f(x) - Df(x+h)\,h}
		\\
		&=
		r_g( f(x); f(x+h) - f(x)) + Dg(f(x+h)) \, r_f(x; h)
		.
	\end{align*}
	Now, the boundedness of $Dg(f(x+h))$
	implies
	\begin{equation*}
		r_{g\circ f}(x; h)
		=
		\oo(\norm{f(x+h) - f(x)}) + \norm{Dg(f(x+h))} \, \oo(\norm{h})
		=
		\oo(\norm{h}).
	\end{equation*}
	Similarly, if $f$ and $g$ are Newton differentiable of order $\alpha \in (0,1]$,
	we get
	\begin{equation*}
		r_{g\circ f}(x; h)
		=
		\OO(\norm{f(x+h) - f(x)}^\alpha) + \norm{Dg(f(x+h))} \, \OO(\norm{h}^\alpha)
		=
		\OO(\norm{h}^\alpha).
	\end{equation*}
	Finally, if both functions are Newton differentiable of order $\infty$,
	we get $r_{g \circ f}(x; h) = 0$ if $h$ is small enough.
\end{proof}
\begin{example}\label{ex:Newton_differentiability_of_absolute_value}
	Exploiting \cref{ex:Newton_differentiability_of_min_max} and
	\cref{lem:chain_rule}, the absolute value function $\abs{\cdot}\colon\R\to\R$
	is Newton differentiable of order $\infty$,
	since $\abs{x}=\max(x,-x)$ for each $x\in\R$.
	Following the convention from \cref{ex:Newton_differentiability_of_min_max},
	the associated
	Newton derivative is given by
	\begin{equation}
		\label{eq:newton_derivative_absolute_value}
			\forall x\in\R\colon\quad
			D|\cdot|(x)
			=
			\begin{cases}
				1	&\text{if}\;x\geq 0,\\
				-1	&\text{if}\;x<0.				
			\end{cases}
	\end{equation}	
\end{example}

\section{M-Stationarity as a nonlinear system of equations}\label{sec:M_St_as_system}

\subsection{Preliminary considerations}\label{sec:preliminaries_NMSt_function}

As stated before, we want to reformulate the M-stationarity system 
\eqref{eq:M_St}
as an equation.
To this end,
we need to encode the complementarity conditions
\eqref{eq:M_St_lambda}
and the conditions \eqref{eq:M_St_mu}--\eqref{eq:M_St_I00}, which depend on index sets,
as the zero level set of a suitable function.
For clarity of the presentation,
we temporarily consider the simplified MPCC problem 
\begin{equation*}
	\label{eq:simple_MPCC}
	\tag{MPCC1}
	\begin{aligned}
		\min_{x} \quad& f(x)
		\\
		\text{s.t.}\quad&
		\begin{aligned}[t]
			0\leq H(x) \perp G(x) &\geq 0,
		\end{aligned}
	\end{aligned}
\end{equation*}
with $G,H\colon\R^n\to\R$, i.e., there is only one complementarity constraint.
This simplification will also ease notation in this section.
The results of this section will be transferred to the problem
\eqref{eq:MPCC} with $p$ many complementarity conditions in
\cref{sec:newton_method}.
The M-stationarity system for \eqref{eq:simple_MPCC} is given by
\begin{subequations}
	\label{eq:simp_MPCC}
	\begin{align}
		\nabla_x\LL(x,\mu,\nu) &=0,\\
		\label{eq:simp_MPCC_feasible}
		0\leq H(x) \perp G(x) &\geq 0,\\
		\label{eq:simp_MPCC_G_perp}
		G(x)  \mu &=0,\\
		\label{eq:simp_MPCC_H_perp}
		H(x)  \nu &=0, \\
		(\mu<0\land\nu<0)\lor
		\mu\nu&=0.
		\label{eq:simp_MPCC_biactive}
	\end{align}
\end{subequations}
We want to find a function $\phi\colon\R^4\to\R^k$
such that \eqref{eq:simp_MPCC} can be equivalently rewritten as
$F(x,\mu,\nu)=0$ where $F:\R^{n+1+1}\to\R^{n+k}$ has the form
\begin{equation*}
	F(x,\mu,\nu) :=
	\begin{bmatrix}
		\nabla_x\LL(x,\mu,\nu) \\
		\phi(G(x),H(x),\mu,\nu)
	\end{bmatrix}.
\end{equation*}
Since we want to apply a Newton method, we require $k=2$.
We also need that the associated Newton matrices $DF(\cdot)$ are invertible
in a neighborhood of the solution of the system $F(x,\mu,\nu)=0$.
This invertibility will be guaranteed by
certain properties of the Newton derivative $D\varphi$.

\subsection{A nonlinear M-stationarity function}

Recall that the set $M$ from \eqref{eq:M_stationarity_set}
corresponds to the M-stationarity conditions \eqref{eq:simp_MPCC}.
We define $\psi_1,\psi_2,\psi_3,\varphi_1\colon\R^4\to\R$ via
\begin{subequations}
	\label{eq:psi_def}
	\begin{align}
		\label{eq:psi_def_1}
		\psi_1(a,b,\mu,\nu) &:= \max(-a,\abs{b},\abs\mu),
		\\
		\label{eq:psi_def_2}
		\psi_2(a,b,\mu,\nu) &:= \max(-b,\abs{a},\abs\nu),
		\\
		\label{eq:psi_def_3}
		\psi_3(a,b,\mu,\nu) &:= \max(\abs{a},\abs{b},\mu,\nu),
		\\
		\label{eq:phi_def_1}
		\phi_1(a,b,\mu,\nu) &:= \min_{i=1,2,3} \psi_i(a,b,\mu,\nu).
	\end{align}
\end{subequations}
Next, we show that
$M$ is precisely the zero level set of $\varphi_1$.
To this end, we note that
\begin{align*}
	M
	=
	\set{(a,b,\mu,\nu) \in \R^4 \given a \ge 0, \; b = \mu = 0}
	\cup
	\set{(a,b,\mu,\nu) \in \R^4 \given b \ge 0, \; a = \nu = 0}&
	\\
	{}\cup
	\set{(a,b,\mu,\nu) \in \R^4 \given a = b = 0, \mu \le 0, \nu \le 0}&
	,
\end{align*}
i.e., $M$ can be written as the
union of three convex, closed sets.
\begin{lemma}
	\label{lem:M_is_roots}
	Let $(a,b,\mu,\nu) \in \R^4$ be given.
	Then $(a,b,\mu,\nu) \in M$ holds
	if and only if $\varphi_1(a,b,\mu,\nu) = 0$ is valid.
\end{lemma}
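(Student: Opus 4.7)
The plan is to exploit the explicit decomposition of $M$ into three convex pieces, stated right before the lemma, and to show that each $\psi_i$ has been engineered precisely so that its zero set matches one of these three pieces.

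First I would observe that each of the functions $\psi_1$, $\psi_2$, $\psi_3$ is nonnegative everywhere on $\R^4$: each is a maximum involving at least one absolute value term (namely $\abs{b}$, $\abs{a}$, and $\abs{a}$, respectively), so $\psi_i(a,b,\mu,\nu) \ge 0$ for all arguments. Consequently $\varphi_1 \ge 0$ and, since $\varphi_1$ is the minimum of the $\psi_i$, the equation $\varphi_1(a,b,\mu,\nu) = 0$ is equivalent to the statement that $\psi_i(a,b,\mu,\nu) = 0$ for at least one $i \in \{1,2,3\}$.

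Next I would determine the zero set of each $\psi_i$ separately by unpacking the $\max$. Since a max of finitely many terms vanishes exactly when all terms are $\le 0$ (and hence the nonnegative ones among them are $=0$), I read off:
\begin{itemize}
\item $\psi_1 = 0$ iff $-a \le 0$, $\abs{b}=0$, $\abs{\mu}=0$, i.e.\ $a \ge 0$ and $b = \mu = 0$, which is the first piece in the decomposition of $M$.
\item $\psi_2 = 0$ iff $b \ge 0$ and $a = \nu = 0$, which is the second piece.
\item $\psi_3 = 0$ iff $\abs{a}=\abs{b}=0$ and $\mu \le 0$, $\nu \le 0$, which is the third piece.
\end{itemize}
Combining these three equivalences with the preceding step yields $\varphi_1(a,b,\mu,\nu)=0$ iff $(a,b,\mu,\nu)$ lies in the union of the three pieces, which by the decomposition given immediately before the lemma is precisely $M$.

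I do not expect any genuine obstacle here; the argument is a routine case-by-case verification. The only subtlety worth flagging is that one should be slightly careful that $\psi_i$ treats the unconstrained variables correctly: e.g.\ $\psi_1$ does not involve $\nu$, reflecting the fact that on the first piece of $M$ the variable $\nu$ is free, and similarly $\psi_2$ does not involve $\mu$. This matches the decomposition and is exactly why the three functions $\psi_i$ were defined the way they are.
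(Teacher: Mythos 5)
Your proof is correct and follows essentially the same route as the paper's: both observe $\varphi_1 \ge 0$, reduce to the vanishing of some $\psi_i$, and match each zero set $\{\psi_i = 0\}$ to one of the three convex pieces of $M$. The extra remarks you add (why each $\psi_i$ is nonnegative, and why $\psi_1$, $\psi_2$ deliberately omit $\nu$, $\mu$) are accurate but not substantively different from the paper's argument.
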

\begin{proof}
	It is clear that $\varphi_1(a,b,\mu,\nu) \ge 0$.
	Hence, $\varphi_1(a,b,\mu,\nu) = 0$
	if and only if
	one of the functions $\psi_1$, $\psi_2$, and $\psi_3$
	vanishes at $(a,b,\mu,\nu)$.

	Now, we observe the equivalencies
	\begin{align*}
		\psi_1(a,b,\mu,\nu) = 0
		&\quad\Leftrightarrow\quad
		-a \le 0,\; b = \mu = 0
		,
		\\
		\psi_2(a,b,\mu,\nu) = 0
		&\quad\Leftrightarrow\quad
		-b \le 0,\; a = \nu = 0
		,
		\\
		\psi_3(a,b,\mu,\nu) = 0
		&\quad\Leftrightarrow\quad
		a = b = 0,\; \mu,\nu \le 0
		.
	\end{align*}
	Thus, $\varphi_1(a,b,\mu,\nu) = 0$ holds
	if and only if one of the left hand sides is true
	and
	$(a,b,\mu,\nu) \in M$
	if and only if one of the right hand sides is true.
	This shows the claim.
\end{proof}
We remark that $\phi_1$ describes the distance of a point $(a,b,\mu,\nu)$
to the set $M$ in the $\ell^\infty$-norm.
This follows from the above representation of $M$
and the fact that each of $\psi_1$, $\psi_2$, $\psi_3$
is the distance to one of the three convex subsets of $M$.

We choose the Newton derivative $D\phi_1$ of $\phi_1$
according to the conventions established in \eqref{eq:newton_derivative_min_max}
and \eqref{eq:newton_derivative_absolute_value}
together with the application of the chain rule.
This choice for $D\phi_1$ is fixed for the remainder of the article.
It implies that
\begin{equation}
	\label{eq:d_phi_1_only_unit_vectors}
	D\phi_1(a,b,\mu,\nu) \in \left\{\pm e_1^\top,\pm e_2^\top,\pm e_3^\top,\pm e_4^\top\right\}
\end{equation}
holds for all $(a,b,\mu,\nu)\in\R^4$.

Let us introduce the other component $\phi_2\colon\R^4\to\R$, which is defined via
\begin{equation}
	\label{eq:def_phi_2}
	\phi_2(a,b,\mu,\nu) :=
	\begin{cases}
		\min(\abs{b},\abs\nu) &\text{if}\; 
		D\phi_1(a,b,\mu,\nu) = \pm e_1^\top,
		\\
		\min(\abs{a},\abs\mu) &\text{if}\;
		D\phi_1(a,b,\mu,\nu) = \pm e_2^\top,
		\\
		\abs{b} &\text{if}\;
		D\phi_1(a,b,\mu,\nu) = \pm e_3^\top,
		\\
		\abs{a} &\text{if}\;
		D\phi_1(a,b,\mu,\nu) = \pm e_4^\top,
	\end{cases}
\end{equation}
where the cases are exhaustive due to \eqref{eq:d_phi_1_only_unit_vectors}.
For the prospective Newton derivative $D\phi_2$ of $\phi_2$
we again use the convention established in \eqref{eq:newton_derivative_min_max} 
as well as \eqref{eq:newton_derivative_absolute_value}
and use the same distinction of cases as in \eqref{eq:def_phi_2}.
The Newton differentiability of $\phi_2$ will be shown in \cref{lem:phi_properties} below.
Finally, let $\phi\colon\R^4\to\R^2$ be the function with components $\phi_1$ and $\phi_2$.
The rows of the Newton derivative $D\phi$ of $\phi$ are given by $D\phi_1$ and $D\phi_2$.
Motivated by our arguments from \cref{sec:preliminaries_NMSt_function}, we call $\varphi$
a nonlinear M-stationarity (NMS) function, see \cref{lem:phi_properties} below as well.

In the next lemma, we will look at the possible values of
the Newton derivative of $\phi$ at points from $M$.
This will be an important result in order to show that the Newton
matrix $DF(\cdot)$ is invertible in a neighborhood of $M$,
see \cref{thm:uniform_invertibility}.
If $\varphi_2$ is chosen differently,
one might obtain less tight estimates for the Newton matrices
$D\varphi$,
and this would result in more restrictive
assumptions for the semismooth Newton method below,
cf.\ the proof of \cref{thm:uniform_invertibility}.
Furthermore, we would like to point the reader's attention to the
fact that the upcoming result is based on the precise conventions from
\eqref{eq:newton_derivative_min_max} and \eqref{eq:newton_derivative_absolute_value}
for the Newton derivative of maximum, minimum, and absolute value as well
as the chain rule from \cref{lem:chain_rule}. More precisely, an alternative
strategy for the choice of the Newton derivatives in 
\cref{ex:Newton_differentiability_of_min_max,ex:Newton_differentiability_of_absolute_value}
is likely to cause the next lemma to be false.
For brevity of notation, we define
the sets of matrices
\begin{equation*}
	\forall i,j\in\{1,\ldots,4\}\colon\quad
	J_{i,j} := \set*{
		\begin{bmatrix}
			\pm e_i^\top
			\\
			\pm e_j^\top
		\end{bmatrix}
	} \cup\set*{
		\begin{bmatrix}
			\pm e_j^\top
			\\
			\pm e_i^\top
		\end{bmatrix}
	}
	\subset \R^{2\times 4}.
\end{equation*}
\begin{lemma}
	\label{lem:nice_jacobians}
	Let $\bar w = (\bar a, \bar b, \bar \mu, \bar \nu) \in M$ be given.
	Then there exists $\varepsilon > 0$ such that
	for all $w = (a, b, \mu, \nu) \in B_\varepsilon(\bar w)$, we have
	\begin{subequations}
		\label{eq:nice_jacbians}
		\begin{align}
			\label{eq:nice_jacbians_1}
			\bar a > 0 &\quad\Rightarrow\quad D\varphi(w) \in J_{2,3}, \\
			\label{eq:nice_jacbians_2}
			\bar b > 0 &\quad\Rightarrow\quad D\varphi(w) \in J_{1,4}, \\
			\label{eq:nice_jacbians_3}
			\bar\mu \ne 0 &\quad\Rightarrow\quad D\varphi(w) \in J_{1,2} \cup J_{1,4}, \\
			\label{eq:nice_jacbians_4}
			\bar\nu \ne 0 &\quad\Rightarrow\quad D\varphi(w) \in J_{1,2} \cup J_{2,3}, \\
			\label{eq:nice_jacobians_5}
			\bar w = 0 &\quad\Rightarrow\quad 
			D\phi(w)\in J_{1,2}\cup J_{2,3}\cup J_{1,4}.
		\end{align}
	\end{subequations}
\end{lemma}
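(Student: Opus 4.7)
The plan is to exploit the decomposition $M=M_1\cup M_2\cup M_3$ introduced in the paragraph before \cref{lem:M_is_roots}, where $M_i$ denotes the closed convex set on which $\psi_i$ vanishes, and then perform a case analysis. The key reduction is that each $\psi_i$ is continuous and nonnegative, so if $\psi_i(\bar w)>0$, then $\psi_i$ is bounded below by some $c>0$ on a sufficiently small ball $B_\varepsilon(\bar w)$ and is never selected by the $\min$ defining $\phi_1$. Hence on $B_\varepsilon(\bar w)$ one has $\phi_1=\min_{i\in V(\bar w)}\psi_i$ with $V(\bar w):=\{i:\bar w\in M_i\}$, and the same principle applied to the individual inputs of each $\max$ in the definition of $\psi_i$ narrows down the possible values of $D\psi_i$.

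For case~(a), $\bar a>0$ gives $V(\bar w)=\{1\}$, so $\phi_1=\psi_1$ on $B_\varepsilon(\bar w)$; since $-a<0$ there, the max in $\psi_1$ is attained at $|b|$ or $|\mu|$, whence $D\phi_1\in\{\pm e_2,\pm e_3\}$. Feeding each branch into the case-based definition of $\phi_2$ and exploiting that $|a|\approx\bar a>0$ is bounded below while $|b|$ and $|\mu|$ tend to zero: in the $\pm e_2$ branch, $\phi_2=\min(|a|,|\mu|)=|\mu|$, so $D\phi_2=\pm e_3$; in the $\pm e_3$ branch, $\phi_2=|b|$, so $D\phi_2=\pm e_2$. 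Both produce $D\phi\in J_{2,3}$. Case~(b) will follow from~(a) by the involution $(a,\mu)\leftrightarrow(b,\nu)$, which swaps $\psi_1\leftrightarrow\psi_2$ and fixes $\psi_3$. For case~(e), $\bar w=0$ yields $V(\bar w)=\{1,2,3\}$, so all four unit directions are a priori possible for $D\phi_1$; plugging each into the case distinction defining $\phi_2$ produces $D\phi_2$, and direct inspection confirms that the resulting pair of rows always lies in $J_{1,2}\cup J_{2,3}\cup J_{1,4}$.

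Case~(c) is the substantive one, with~(d) following by the symmetry $(a,\mu)\leftrightarrow(b,\nu)$. Since $a\mu=0$ on $M$ and $\bar\mu\neq 0$, one has $\bar a=0$; the remaining M-stationarity conditions leave four sub-cases for $(\bar b,\bar\mu,\bar\nu)$: (i) $\bar b>0,\bar\nu=0$; (ii) $\bar b=\bar\nu=0,\bar\mu>0$; (iii) $\bar b=\bar\nu=0,\bar\mu<0$; and (iv) $\bar b=0,\bar\mu<0,\bar\nu<0$. In all sub-cases $\psi_1(\bar w)\ge|\bar\mu|>0$, and in~(iv) additionally $\psi_2(\bar w)=|\bar\nu|>0$, so $\phi_1\in\{\psi_2,\psi_3\}$ in sub-cases (i)--(iii) and $\phi_1=\psi_3$ in~(iv), all on a sufficiently small $B_\varepsilon(\bar w)$. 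A short sign analysis---using that $\mu<0$ near $\bar w$ whenever $\bar\mu<0$, and $-b<0$ near $\bar w$ whenever $\bar b>0$---shows $D\psi_2\in\{-e_2,\pm e_1,\pm e_4\}$ and, whenever $\psi_3$ is a candidate for $\phi_1$, that $\mu$ never realizes the max in $\psi_3$, so $D\psi_3\in\{\pm e_1,\pm e_2,e_4\}$. Thus $D\phi_1\in\{\pm e_1,\pm e_2,\pm e_4\}$. Feeding this into the case distinction defining $\phi_2$, and using $\min(|a|,|\mu|)=|a|$ near $\bar w$ (since $|a|\to 0$ while $|\mu|$ is bounded below), I obtain $D\phi_2=\pm e_1$ whenever $D\phi_1\in\{\pm e_2,\pm e_4\}$, and $D\phi_2\in\{\pm e_2,\pm e_4\}$ when $D\phi_1=\pm e_1$. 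Assembling these gives $D\phi\in J_{1,2}\cup J_{1,4}$.

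The hard part will be the bookkeeping in case~(c): one must simultaneously rule out that $\phi_1$ is realized by $\psi_1$ (which would yield the forbidden $\pm e_3$ branch through $D|\mu|$), exclude the index-3 direction from $D\psi_3$ via a sign analysis of $\mu$ near $\bar w$, and verify that the $\pm e_2$ branch of $D\phi_1$ cascades into $D\phi_2=\pm e_1$ rather than the a priori possible $\pm e_3$. All three reductions rest on the M-stationarity structure at $\bar w$ (notably $\bar a=0$ and the sign pattern of $(\bar\mu,\bar\nu)$) together with the priority convention in~\eqref{eq:newton_derivative_min_max} that selects smaller indices in $\max$ and $\min$.
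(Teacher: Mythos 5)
Your proposal is correct and follows essentially the same approach as the paper's proof: a case analysis driven by the priority conventions for $\min$ and $\max$ together with continuity arguments that rule out certain branches near $\bar w$. The main organizational difference is cosmetic—you package the reduction via the index set $V(\bar w)$ and treat case~(c) through four sub-cases on $(\bar b,\bar\mu,\bar\nu)$, whereas the paper argues directly through inequalities like $\psi_1 < \min(\psi_2,\psi_3)$ and handles case~(e) immediately by observing that the definition of $\phi_2$ forces $D\phi(w)\in J_{1,2}\cup J_{2,3}\cup J_{1,4}$ for \emph{every} $w\in\R^4$; your sub-case bookkeeping is occasionally more permissive than needed (e.g., you keep $\pm e_4$ as a possibility for $D\psi_3$ in the $\bar\mu<0$ sub-cases although it cannot occur), but this never threatens the conclusion.
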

\begin{proof}
	Due to the definition of $\phi_2$,
	the possible values for $D\phi(w)$ can only be in
	$J_{1,2}\cup J_{2,3}\cup J_{1,4}$ for all $w\in\R^4$.
	Clearly, the implication \eqref{eq:nice_jacobians_5} follows immediately.

	Suppose that $\bar a>0$ holds.
	Then we have $\bar b=\bar\mu=0$.
	Therefore, there exists $\eps>0$ such that $\max(\abs{b},\abs\mu)<a$ holds
	for all $w=(a,b,\mu,\nu)\in B_\eps(\bar w)$.
	It follows that $\phi_1=\psi_1 < \min(\psi_2,\psi_3)$ holds on $B_\eps(\bar w)$.
	Thus we obtain $D\phi_1(w)\in\set{\pm e_2^\top,\pm e_3^\top}$.
	If we again consider that $\abs\mu<\abs{a}$ then 
	the implication \eqref{eq:nice_jacbians_1} follows.
	The implication \eqref{eq:nice_jacbians_2} can be shown in a similar way.

	Let us consider the case $\bar\mu\neq0$ and $\bar\nu\neq0$.
	Then we have $\bar a=\bar b=0$
	and also $\bar\mu,\bar\nu < 0$.
	Therefore, there exists $\eps>0$ such that 
	$\max(\abs{a},\abs{b})<\min(-\mu,-\nu)$ holds
	for all $w=(a,b,\mu,\nu)\in B_\eps(\bar w)$.
	It follows that $\phi_1=\psi_3 < \min(\psi_1, \psi_2)$ holds on $B_\eps(\bar w)$.
	Thus we obtain $D\phi_1(w)\in\set{\pm e_1^\top,\pm e_2^\top}$.
	If we consider that $\abs{a}<\abs{\mu}$ and $\abs{b}<\abs{\nu}$ then 
	$D\phi(w)\in J_{1,2}$ follows.

	Next, we consider the case that $\bar\mu\neq0$ but $\bar\nu=\bar b=0$.
	Then we have $\bar a=0$.
	Therefore, there exists $\eps>0$ such that 
	$\max(\abs{a},\abs{b},\abs\nu)<\abs\mu$ holds
	for all points $w=(a,b,\mu,\nu)\in B_\eps(\bar w)$.
	It follows that $\phi_1 < \psi_1$ holds on $B_\eps(\bar w)$.
	By a distinction of cases we can obtain that 
	$D\phi_1(w)\in\set{\pm e_1^\top,\pm e_2^\top,\pm e_4^\top}$.
	If we consider \eqref{eq:def_phi_2} and that $\abs{a}<\abs\mu$ then
	$D\phi(w)\in J_{1,2}\cup J_{1,4}$ follows.

	For the case that $\bar\mu\neq0$, $\bar\nu=0$, but $\bar b>0$
	we already know from \eqref{eq:nice_jacbians_2} that 
	$D\phi(w)\in J_{1,2}\cup J_{1,4}$ holds as well.
	If we combine the previous cases, then we obtain \eqref{eq:nice_jacbians_3}.
	The implication \eqref{eq:nice_jacbians_4} can be shown in a similar way.
\end{proof}
We continue with some notable properties of $\phi$.
The first property is important because it allows us
to characterize M-stationarity points as the solution set of
a (nonsmooth) equation, and this is the essential
property of an NMS-function.
\begin{lemma}
	\label{lem:phi_properties}
		\skipline
	\begin{enumerate}
		\item
			\label{item:phi_characterizes_m_stat}
			We have $\phi(a,b,\mu,\nu)=0$ if and only if $(a,b,\mu,\nu)\in M$.
		\item
			\label{item:phi_newton_differentiable}
			The function $\phi$ is Newton differentiable of order $\infty$ on $M$.
		\item
			\label{item:phi_not_cont}
			The function $\phi$ is not continuous
			in any open neighborhood of $M$.
		\item
			\label{item:phi_cont}
			The function $\phi$ is calm at every point $\bar w=(\bar a,\bar b,\bar \mu,\bar \nu)\in M$
			with calmness modulus $1$, i.e., there is a neighborhood $U$ of $\bar w$
			such that
			\[
				\forall w\in U\colon\quad
				\norm{\phi(w)-\phi(\bar w)}\leq\norm{w-\bar w}.
			\]
		\item
			\label{item:phi_cont_2}
			If the sequence $(w_k)_{k \in \N} \subset \R^4$ satisfies $\varphi(w_k) \to 0$
			and $w_k \to \bar w \in \R^4$,
			then $\varphi(\bar w) = 0$.
	\end{enumerate}
\end{lemma}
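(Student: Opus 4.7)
The plan is to dispatch the five items in the order (a), (e), (d), (c), (b), saving the Newton differentiability for last since it is the most delicate. For (a), I would apply \cref{lem:M_is_roots} to obtain $\phi_1=0\Leftrightarrow (a,b,\mu,\nu)\in M$, and then inspect \eqref{eq:def_phi_2} case by case (according to which of $\psi_1,\psi_2,\psi_3$ vanishes, which in turn constrains the value of $D\phi_1$) to verify that $\phi_1(a,b,\mu,\nu)=0$ forces $\phi_2(a,b,\mu,\nu)=0$ as well. For (e), observe that $\phi_1$ is continuous on $\R^4$ as a $\min$-of-$\max$ of continuous functions, so $\phi_1(w_k)\to 0$ and $w_k\to\bar w$ force $\phi_1(\bar w)=0$, hence $\bar w\in M$ by \cref{lem:M_is_roots} and $\phi(\bar w)=0$ by (a). For (d), the characterization of $\phi_1$ as the $\ell^\infty$-distance to $M$ remarked after \cref{lem:M_is_roots} delivers $\phi_1(w)\le\norm{w-\bar w}_\infty$ for $\bar w\in M$. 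To bound $\phi_2$, I would use \cref{lem:nice_jacobians} to restrict the pieces of \eqref{eq:def_phi_2} that can be active in a neighborhood of $\bar w$; in each admissible branch every component entering an absolute value or a minimum has vanishing value at $\bar w$, so the piece is bounded by $\norm{w-\bar w}_\infty$, and equivalence of norms absorbs the modulus.

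For (c), I would construct an explicit sequence of discontinuity points approaching $M$. Setting $\bar w_\eps:=(\eps,\eps,\eps,0)$ for $\eps>0$, a direct computation from \eqref{eq:psi_def} and \eqref{eq:def_phi_2} shows that, for small $\delta>0$,
\begin{equation*}
  \phi(\eps,\eps,\eps+\delta,0)=(\eps,0),
  \qquad
  \phi(\eps,\eps,\eps-\delta,0)=(\eps,\eps-\delta),
\end{equation*}
because in the former the unique minimizer of $\phi_1$ is $\psi_2$ (triggering $D\phi_1=e_1^\top$ and the branch $\phi_2=\min(\abs b,\abs\nu)=0$), whereas in the latter the three $\psi_i$ tie at the value $\eps$ and the priority rule selects $\psi_1$ (triggering $D\phi_1=e_2^\top$ and the branch $\phi_2=\min(\abs a,\abs\mu)=\eps-\delta$). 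Letting $\delta\downto 0$ produces two different limits at $\bar w_\eps$, so $\phi$ is discontinuous at $\bar w_\eps$. Since $\bar w_\eps\to 0\in M$, every open neighborhood of $M$ contains such a point, proving the assertion.

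The delicate item is (b). Combining \cref{lem:chain_rule} with the order-$\infty$ Newton differentiability of $\min$, $\max$, and $\abs\cdot$ (\cref{ex:Newton_differentiability_of_min_max,ex:Newton_differentiability_of_absolute_value}) immediately yields $\phi_1(\bar w+h)-\phi_1(\bar w)-D\phi_1(\bar w+h)\,h=0$ for all sufficiently small $h$ and every $\bar w\in\R^4$. The obstacle lies in $\phi_2$, whose formula \eqref{eq:def_phi_2} is piecewise with case selector depending on $D\phi_1(\bar w+h)$. I would use \cref{lem:nice_jacobians} to enumerate, for each type of $\bar w\in M$ (organized by which of $\bar a,\bar b,\bar\mu,\bar\nu$ are nonzero), the values of $D\phi(\bar w+h)$ reachable for small $h$. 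In every admissible branch the active piece of $\phi_2$ is an absolute value or a minimum of two absolute values whose arguments vanish at $\bar w$, so the arguments coincide with the corresponding components of $h$; the order-$\infty$ Newton differentiability of $\abs\cdot$ and $\min$ together with the chain rule then produces $\phi_2(\bar w+h)=D\phi_2(\bar w+h)\,h$ for small $h$, closing the argument. The main obstacle is bookkeeping: one must verify in every branch of \cref{lem:nice_jacobians} that the priority tie-breaking convention used to evaluate $D\phi_1(\bar w+h)$ is precisely the one triggering, via \eqref{eq:def_phi_2}, the piece of $\phi_2$ whose Newton derivative is then applied.
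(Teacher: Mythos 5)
The proposal follows essentially the same strategy as the paper for parts~\ref{item:phi_characterizes_m_stat}, \ref{item:phi_newton_differentiable}, and \ref{item:phi_cont_2}. For part~\ref{item:phi_not_cont} you pick a different witness for discontinuity: the paper takes $w_t=(2t,2t,t,0)$ and perturbs the first coordinate, whereas you take $\bar w_\eps=(\eps,\eps,\eps,0)$ and perturb the $\mu$-coordinate; your computation is correct and works just as well. For part~\ref{item:phi_newton_differentiable}, your phrasing of ``closing the argument via the chain rule'' is a bit loose, since the branch of $\varphi_2$ that is active depends on $h$ and thus one cannot literally compose; what actually carries the day (and what the paper does) is the pointwise identity $\varphi_2(\bar w+h)=D\varphi_2(\bar w+h)\,h$, verified case-by-case using $\varphi_2(\bar w)=0$ and the constraints \cref{lem:nice_jacobians} imposes on which piece can be active. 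Your ``bookkeeping'' remark shows you see this, so this is an imprecision in exposition rather than a gap.

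The one genuine gap is in part~\ref{item:phi_cont}. Your bound $|\varphi_1(w)|\le\norm{w-\bar w}_\infty$ and a branch-by-branch bound $|\varphi_2(w)|\le\norm{w-\bar w}_\infty$ together give $\norm{\varphi(w)}_\infty\le\norm{w-\bar w}_\infty$, but ``equivalence of norms absorbs the modulus'' does not deliver modulus \emph{one}: in the Euclidean norm this argument yields modulus $\sqrt2$, and the statement explicitly claims modulus $1$. To get the sharp constant, you should exploit that, near $\bar w$, not only is each component bounded by $\norm{w-\bar w}_\infty$ but in fact $\varphi_1(w)=\pm(w_i-\bar w_i)$ and $\varphi_2(w)=\pm(w_j-\bar w_j)$ for \emph{distinct} indices $i,j$, so $\norm{\varphi(w)}=\bigl((w_i-\bar w_i)^2+(w_j-\bar w_j)^2\bigr)^{1/2}\le\norm{w-\bar w}$. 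The paper obtains this most economically as a corollary of part~\ref{item:phi_newton_differentiable}: once $\varphi(w)-\varphi(\bar w)=D\varphi(w)(w-\bar w)$ holds near $\bar w$, the claim follows from $\norm{D\varphi(w)}=1$, which is immediate because $D\varphi(w)$ has two distinct signed unit-vector rows by \cref{lem:nice_jacobians}. You would do well to adopt that shortcut.
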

\begin{proof}
	We start with part~\ref{item:phi_characterizes_m_stat}.
	\cref{lem:M_is_roots} shows
	$(a,b,\mu,\nu)\in M$
	if and only if $\phi_1(a,b,\mu,\nu)=0$.
	Thus, it remains to show that $\phi_2(a,b,\mu,\nu)=0$
	for all points $(a,b,\mu,\nu)\in M$.
	Let $(a,b,\mu,\nu) \in M$ be given.
	We consider the case that $a>0$.
	Then $b=\mu=0$ follows.
	Due to \eqref{eq:nice_jacbians_1} we have
	$D\phi_1(a,b,\mu,\nu)\in\set{\pm e_2^\top,\pm e_3^\top}$,
	which implies $\phi_2(a,b,\mu,\nu)=0$.
	For the case that $b>0$ we can argue similarly.
	For the remaining case $a=b=0$ the property $\phi_2(a,b,\mu,\nu)=0$
	follows directly from the definition of $\phi_2$.

	For part~\ref{item:phi_newton_differentiable},
	let us fix a point $\bar w=(\bar a,\bar b,\bar \mu,\bar \nu)\in M$.
	For $\phi_1$, the Newton differentiability of order $\infty$
	follows from the chain rule \cref{lem:chain_rule}.
	Due to $\phi_2(\bar w)=0$, it suffices to show that
	\begin{equation}
		\label{eq:phi_2_newton_equality}
		\phi_2(w)= D\phi_2(w)(w-\bar w)
	\end{equation}
	holds in a neighborhood of $\bar w$.
	Let $\varepsilon > 0$ from \cref{lem:nice_jacobians} be given
	and consider $w = (a,b,\mu,\nu) \in B_\varepsilon(\bar w)$.
	In case $D\phi_2(w) = \pm e_1^\top$, \eqref{eq:nice_jacbians_1}
	implies $\bar a = 0$ and from the definition of $D\varphi_2$,
	we get $\varphi_2(w) = \pm a$.
	Hence, \eqref{eq:phi_2_newton_equality} follows.
	In case $D\phi_2(w) = \pm e_3^\top$, \eqref{eq:nice_jacbians_3}
	implies $\bar\mu = 0$ and from the definition of $D\varphi_2$,
	we get $\varphi_2(w) = \pm \mu$.
	Again, \eqref{eq:phi_2_newton_equality} follows.
	The remaining cases follow analogously.
	
	We continue with part~\ref{item:phi_not_cont}.
	Any open neighborhood of $M$ contains the point
	$w_t:=(2t,2t,t,0)$ for some $t > 0$.
	It can be shown that
	\begin{equation*}
		\phi_2(2t,2t,t,0) = t
		\neq
		0=\lim_{s\downto0}
		\phi_2(2t-s,2t,t,0)
	\end{equation*}
	holds.
	Hence $\phi_2$ is not continuous at $w_t$.

	In order to show part~\ref{item:phi_cont},
	we can utilize part~\ref{item:phi_newton_differentiable},
	which implies that
	\begin{equation*}
		\phi(w)-\phi(\bar w)=D\phi(w)(w-\bar w)
	\end{equation*}
	holds for all $w$ in a neighborhood of $\bar w$.
	Since $\norm{D\phi(w)}=1$ holds in a neighborhood of $\bar w$
	due to \cref{lem:nice_jacobians},
	we get
	$ \norm{\phi( w)-\phi(\bar w)} \leq \norm{D\phi(w)}\norm{w-\bar w} = \norm{ w -\bar w}$.

	For part \ref{item:phi_cont_2},
	we first use the continuity of $\varphi_1$
	to obtain $\varphi_1(\bar w) = 0$,
	i.e., $\bar w \in M$, see \cref{lem:M_is_roots}.
	From part \ref{item:phi_characterizes_m_stat}
	we conclude $\varphi(\bar w) = 0$.
\end{proof}

The following lemma will be useful in order to interpret the semismooth Newton method
as an active set strategy for \eqref{eq:MPCC}.
\begin{lemma}
	\label{lem:for_interpretation_as_AS}
	Let $w = (a,b,\mu,\nu)$ and $\delta w = (\delta a, \delta b, \delta\mu, \delta\nu)$
	be given.
	Then we have the equivalence
	\begin{equation*}
		D\varphi(w) \, \delta w = -\varphi(w)
		\quad\Leftrightarrow\quad
		\begin{cases}
			\delta b = - b,\, \delta \mu = - \mu
			&\quad\text{if}\; D\varphi(w)\in J_{2,3},\\
			\delta a = - a,\, \delta \nu = - \nu
			&\quad\text{if}\; D\varphi(w)\in J_{1,4},\\
			\delta a = - a,\, \delta b = - b
			&\quad\text{if}\; D\varphi(w)\in J_{1,2}.
		\end{cases}
	\end{equation*}
\end{lemma}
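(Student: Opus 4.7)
The plan is to exploit the structural identity that, whenever $D\varphi_i(w) = s_i e_{j_i}^\top$ with $s_i \in \{-1,+1\}$ and $j_i \in \{1,2,3,4\}$, one has $\varphi_i(w) = s_i \cdot w_{j_i}$, where we write $w = (w_1, w_2, w_3, w_4) = (a,b,\mu,\nu)$. Given this identity, the linear system $D\varphi(w)\,\delta w = -\varphi(w)$ decouples into the two scalar equations $s_i\, \delta w_{j_i} = -s_i\, w_{j_i}$ for $i=1,2$, each of which is equivalent to $\delta w_{j_i} = -w_{j_i}$. Since $D\varphi(w) \in J_{j_1, j_2}$ holds precisely when the rows of $D\varphi(w)$ are, in some order, signed copies of $e_{j_1}^\top$ and $e_{j_2}^\top$, the three cases of the statement correspond exactly to the three possible unordered index pairs $\{j_1,j_2\} \in \bigl\{\{1,2\},\{2,3\},\{1,4\}\bigr\}$.

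To establish the identity for $\varphi_1$, I will go through the possibilities using $\varphi_1 = \min(\psi_1,\psi_2,\psi_3)$ together with the definitions \eqref{eq:psi_def_1}--\eqref{eq:psi_def_3}. The priority convention \eqref{eq:newton_derivative_min_max} together with the chain rule \cref{lem:chain_rule} picks out a unique ``surviving'' summand $q(w)$ for which $\varphi_1(w) = q(w)$, and $q$ is one of $-a$, $-b$, $\mu$, $\nu$, $|a|$, $|b|$, $|\mu|$, $|\nu|$. For the linear entries $q = \pm w_j$ the derivative is $\pm e_j^\top$ with the matching sign, so the identity $q(w) = s_1 w_{j_1}$ is immediate. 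For the absolute-value entries, the convention \eqref{eq:newton_derivative_absolute_value} gives $D|w_j|(w_j) = \operatorname{sgn}(w_j)$ with the understanding $\operatorname{sgn}(0) = 1$, and under the same convention $|w_j| = \operatorname{sgn}(w_j)\,w_j$; hence the identity again holds with $s_1 = \operatorname{sgn}(w_j)$ and $j_1 = j$. The analysis for $\varphi_2$ is shorter, since each branch of \eqref{eq:def_phi_2} is either an absolute value of one coordinate or a minimum of two absolute values of single coordinates, so identical reasoning applies.

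The main subtlety is that the identity $\varphi_i(w) = s_i w_{j_i}$ only holds thanks to the careful choices of conventions for $D\min$, $D\max$, and $D|\cdot|$ set up in \eqref{eq:newton_derivative_min_max} and \eqref{eq:newton_derivative_absolute_value}; a different sign convention at the kinks would potentially break it. Once the identity is verified, the equivalence claimed in the lemma follows at once by substituting into $D\varphi(w)\,\delta w = -\varphi(w)$ and reading off the two coordinates of $\delta w$ indexed by $j_1$ and $j_2$, which are precisely the two indices selected by the $J_{i,j}$ label of $D\varphi(w)$.
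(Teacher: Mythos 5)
Your proof is correct and follows essentially the same route as the paper's: the paper singles out one of the $24$ cases, isolates the "crucial point" that $D\varphi_1(w)=e_2^\top$ forces $\varphi_1(w)=b$ because of the $\min$/$\max$/$\abs{\cdot}$ conventions and the chain rule, and then declares the remaining cases analogous. You have simply promoted that crucial point to the general identity $D\varphi_i(w)=s_i e_{j_i}^\top \Rightarrow \varphi_i(w)=s_i w_{j_i}$ and verified it once for all branches, which dispatches the case distinction more cleanly but is the same underlying argument.
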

\begin{proof}
	We first consider
	$D\varphi(w)\in J_{2,3}$.
	The set $J_{2,3}$ contains exactly the eight matrices
	\begin{align*}
		&
		\begin{pmatrix}
			0 & 1 & 0 & 0
			\\
			0 & 0 & 1 & 0
		\end{pmatrix}
		,\;
		\begin{pmatrix}
			0 & 1 & 0 & 0
			\\
			0 & 0 & -1 & 0
		\end{pmatrix}
		,\;
		\begin{pmatrix}
			0 & -1 & 0 & 0
			\\
			0 & 0 & 1 & 0
		\end{pmatrix}
		,\;
		\begin{pmatrix}
			0 & -1 & 0 & 0
			\\
			0 & 0 & -1 & 0
		\end{pmatrix}
		,\\
		&
		\begin{pmatrix}
			0 & 0 & 1 & 0
			\\
			0 & 1 & 0 & 0
		\end{pmatrix}
		,\;
		\begin{pmatrix}
			0 & 0 & -1 & 0
			\\
			0 & 1 & 0 & 0
		\end{pmatrix}
		,\;
		\begin{pmatrix}
			0 & 0 & 1 & 0
			\\
			0 & -1 & 0 & 0
		\end{pmatrix}
		,\;
		\begin{pmatrix}
			0 & 0 & -1 & 0
			\\
			0 & -1 & 0 & 0
		\end{pmatrix}
		.
	\end{align*}
	We discuss the case that $D\varphi(w)$ coincides with the second matrix,
	i.e., $D\varphi_1(w) = e_2^\top$ and $D\varphi_2(w) = -e_3^\top$.
	The crucial point of this proof is the following:
	Since the function $\varphi_1$ is composed by a composition of $\min$ and $\max$,
	we can utilize
	$D\varphi_1(w) = e_2^\top$
	to obtain
	$\varphi_1(w) = b$
	(recall that $D\varphi_1$ is chosen according to
	the convention established in \cref{ex:Newton_differentiability_of_min_max}
	and the chain rule).
	Hence, we find
	\begin{equation*}
		D\varphi_1(w) \, \delta w = -\varphi_1(w)
		\quad\Leftrightarrow\quad
		e_2^\top \delta w = -b
		\quad\Leftrightarrow\quad
		\delta b = -b.
	\end{equation*}
	Similarly, from $D\varphi_2(w) = -e_3^\top$ we obtain $\varphi_2(w) = -\mu$
	and, thus,
	\begin{equation*}
		D\varphi_2(w) \, \delta w = -\varphi_2(w)
		\quad\Leftrightarrow\quad
		-e_3^\top \delta w = \mu
		\quad\Leftrightarrow\quad
		-\delta\mu=\mu.
	\end{equation*}
	This finishes the proof for this particular case.
	The remaining $23$ cases follow similarly.
\end{proof}

\section{Application of a semismooth Newton method}
\label{sec:newton_method}

Using the NCP-function $\pi_\textup{min}\colon\R^2\to\R$ as well as the NMS-function
$\varphi\colon\R^4\to\R^2$ constructed in \cref{sec:M_St_as_system}, we introduce
$F\colon\R^n\times\R^\ell\times\R^m\times\R^p\times\R^p\to \R^n\times\R^\ell\times\R^m\times\R^{2p}$
via
\begin{equation}\label{eq:def_F}
	F(x,\lambda,\eta,\mu,\nu)
	:=
	\begin{bmatrix}
		\nabla_x\LL(x,\lambda,\eta,\mu,\nu)
		\\
		[
			\pi_\textup{min}( -g_i(x), \lambda_i ) 
		]_{I^\ell}
		\\
		h(x)
		\\
		[
			\varphi( G_i(x), H_i(x), \mu_i, \nu_i )
		]_{I^p}
	\end{bmatrix}.
\end{equation}
Clearly, by \cref{lem:phi_properties}, a point $x\in\R^n$ is M-stationary for \eqref{eq:MPCC}
if and only if there is a quadruple $(\lambda,\eta,\mu,\nu)$ such that $F(x,\lambda,\eta,\mu,\nu)=0$
holds. In this case, it holds $(\lambda,\eta,\mu,\nu)\in\Lambda^\textup{M}(x)$.
Observing that all the data functions $f$, $g$, $h$, $G$, and $H$ are twice continuously 
differentiable, \cref{lem:chain_rule,lem:phi_properties} guarantee that $F$ is Newton differentiable
on the set of its roots.
Thus, we may apply the semismooth Newton method from \cref{sec:nonsmooth_Newton} in order to
find the roots of $F$, i.e., M-stationary points of \eqref{eq:MPCC}.

In order to guarantee convergence of the Newton method to an M-stationary point $x\in\R^n$ of \eqref{eq:MPCC}
with associated multiplier $(\lambda,\eta,\mu,\nu)\in\Lambda^\textup{M}(x)$, we have to guarantee
that the Newton derivative of $F$ is uniformly invertible in a neighborhood of $z:=(x,\lambda,\eta,\mu,\nu)$.
Abstractly, we have
\begin{align*}
	DF(z)
	=
	\begin{bmatrix}
		\nabla^2_{xx}\LL(z)
		&
		g'(x)^\top
		&
		h'(x)^\top
		&
		G'(x)^\top
		&
		H'(x)^\top
		\\
		A_1(z)
		&
		A_2(z)
		&
		0
		&
		0
		&
		0
		\\
		h'(x)
		&
		0
		&
		0
		&
		0
		&
		0
		\\
		A_3(z)
		&
		0
		&
		0
		&
		A_4(z)
		&
		A_5(z)
	\end{bmatrix}
\end{align*}
for the Newton derivative of $F$ at $z$ were we used
\begin{align*}
	A_1(z)&:=[-D_a\pi_\textup{min}(-g_i(x),\lambda_i)\nabla g_i(x)^\top]_{I^\ell},\\
	A_2(z)&:=[D_b\pi_\textup{min}(-g_i(x),\lambda_i)e_i^\top]_{I^\ell},\\
	A_3(z)&:=[D_a\varphi(G_i(x), H_i(x), \mu_i, \nu_i)\nabla G_i(x)^\top
	    		+D_b\varphi(G_i(x), H_i(x), \mu_i, \nu_i)\nabla H_i(x)^\top]_{I^p},\\
	A_4(z)&:=[D_\mu\varphi(G_i(x), H_i(x), \mu_i, \nu_i)e_i^\top]_{I^p},\\
	A_5(z)&:=[D_\nu\varphi(G_i(x), H_i(x), \mu_i, \nu_i)e_i^\top]_{I^p}.
\end{align*}

\begin{theorem}
	\label{thm:uniform_invertibility}
	Let $\bar x\in\R^n$ be an M-stationary point of \eqref{eq:MPCC} where
	MPCC-LICQ holds. Furthermore, assume that MPCC-SSOC holds at $\bar x$ w.r.t.\ the multipliers
	$(\bar\lambda,\bar\eta,\bar\mu,\bar\nu)\in\Lambda^\textup{M}(\bar x)$.
	Set $\bar z:=(\bar x,\bar\lambda,\bar\eta,\bar\mu,\bar\nu)$ and observe that this
	point solves $F(\bar z)=0$.
	Then there exist $\varepsilon > 0$ and $C > 0$ such that
	$\norm{DF(z)^{-1}} \le C$ for all $z \in B_\varepsilon(\bar z)$.
\end{theorem}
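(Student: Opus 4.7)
The plan is to decompose $DF(z)$, for $z$ near $\bar z$, into a finite family of branch matrices $DF^\sigma(z)$ indexed by the possible values of the Newton derivatives of $\pi_{\textup{min}}$ (at each index $i\in I^\ell$) and of $\varphi$ (at each index $i\in I^p$). By \cref{lem:nice_jacobians} (together with an analogous analysis of $\pi_{\textup{min}}$ based on \cref{ex:Newton_differentiability_of_min_max}), only finitely many branches $\sigma$ are admissible for $z$ in a sufficiently small ball around $\bar z$, and for each fixed $\sigma$ the branch matrix $DF^\sigma(z)$ depends continuously on $z$. It therefore suffices to show that $DF^\sigma(\bar z)$ is invertible for every admissible $\sigma$; \cref{lem:neumann_series} applied branch-by-branch then produces a uniform bound on $\norm{DF^\sigma(z)^{-1}}$ on a neighborhood of $\bar z$, and taking the minimum of the finitely many neighborhoods and the maximum of the finitely many bounds yields the desired $\varepsilon$ and $C$.

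For a fixed branch $\sigma$, \cref{lem:for_interpretation_as_AS} together with the analogous identity for $\pi_{\textup{min}}$ shows that every row of $DF^\sigma(z)$ coming from an NCP or NMS equation is either of the form $\pm\nabla c_i(x)^\top$ in the $x$-columns (enforcing the linearization of an active data constraint $c_i\in\{g_i,h_j,G_i,H_i\}$) or of the form $\pm e_i^\top$ in a single multiplier column (fixing that multiplier to zero). Each activated constraint is paired with a unique \emph{active} multiplier whose column in the top block of $DF^\sigma(z)$ contains exactly the matching gradient $\nabla c_i$. Permuting the columns so that the fixed multipliers come last and rearranging the rows accordingly transforms $DF^\sigma(\bar z)$ into the block form
\[
	\begin{bmatrix}
		\nabla^2_{xx}\LL(\bar z) & K^\top D & L^\top \\
		D K & 0 & 0 \\
		0 & 0 & \pm P
	\end{bmatrix},
\]
where $K$ collects the active constraint gradients aligned with the active multiplier columns, $D$ is a diagonal matrix with $\pm 1$ entries, and $\pm P$ is a signed permutation produced by the multiplier-fixing rows. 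The bottom-right block is trivially invertible, and a congruence transformation by $\operatorname{diag}(I,D,I)$ absorbs the signs, reducing the invertibility of $DF^\sigma(\bar z)$ to that of the saddle-point matrix $\begin{bmatrix}\nabla^2_{xx}\LL(\bar z)&K^\top\\K&0\end{bmatrix}$.

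I would then apply \cref{lem:saddle_point_system} to this reduced matrix. The rows of $K$ form a subset of those appearing in \cref{def:MPCC_LICQ}, so MPCC-LICQ immediately yields full row rank. A short case analysis across the biactive branches $J_{1,2},J_{1,4},J_{2,3}$ from \cref{lem:nice_jacobians} associates $\sigma$ with a partition $(\beta,\bar\beta)$ of $I^{00}_{00}(\bar x,\bar\mu,\bar\nu)$: indices branching into $J_{1,4}$ contribute to $\beta$, those branching into $J_{2,3}$ to $\bar\beta$, and those branching into $J_{1,2}$ may be assigned to either set. With this identification, each equality defining $\ker K$ either coincides with or strengthens an equality appearing in the description of $S_\beta(\bar x,\bar\lambda,\bar\mu,\bar\nu)$; the potential over-activations (choosing the row $-\nabla g_i^\top$ for $i\in I^g(\bar x)$ with $\bar\lambda_i=0$, or branching into $J_{1,2}$ at an $I^{00}_{\pm\R}$ or $I^{00}_{\R\pm}$ index) merely add further equalities and hence shrink $\ker K$. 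Thus $\ker K\subseteq S_\beta(\bar x,\bar\lambda,\bar\mu,\bar\nu)\subseteq S(\bar x,\bar\lambda,\bar\mu,\bar\nu)$, and MPCC-SSOC yields positive definiteness of $\nabla^2_{xx}\LL(\bar z)$ on $\ker K$. \Cref{lem:saddle_point_system} now delivers invertibility of the reduced matrix and, via the block reduction above, of $DF^\sigma(\bar z)$.

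The main technical obstacle lies in the combinatorial bookkeeping of the third step: one must enumerate all admissible branches $\sigma$ allowed by \cref{lem:nice_jacobians}, construct the appropriate partition $(\beta,\bar\beta)$ of $I^{00}_{00}$ for each, and verify carefully that the row pattern of $K$ does match a subset of the MPCC-LICQ block while $\ker K$ genuinely embeds into $S_\beta$. Once these identifications are made explicit, the combination of \cref{lem:saddle_point_system} and \cref{lem:neumann_series} makes the remainder of the argument essentially routine.
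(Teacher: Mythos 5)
Your proposal is correct and follows essentially the same route as the paper: enumerate the finitely many admissible Newton-derivative branches near $\bar z$ via \cref{lem:nice_jacobians}, reduce each branch (after eliminating the rows that fix a multiplier) to a saddle-point system whose constraint block has full row rank by MPCC-LICQ and on whose kernel the Hessian is positive definite by MPCC-SSOC, then invoke \cref{lem:saddle_point_system}, \cref{lem:neumann_series}, and finiteness to obtain the uniform bound. One cosmetic slip: after the permutation, the $(1,2)$-block is $K^\top$ (the stationarity row of $DF$ carries no signs), not $K^\top D$, so the sign matrix is absorbed by a left multiplication of the constraint rows by $D$ (which preserves invertibility) rather than by a congruence — but this does not affect the argument.
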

\begin{proof}
	First, we provide a result
	for a linear system associated with the solution $\bar z$.
	To this end, let matrices
	$P_i\in \R^{1\times 2}$, $Q_j\in \R^{2\times 4}$ 
	for
	$i \in I^\ell$, $j \in I^p$
	be given such that
	\begin{equation}
		\label{eq:matrix_conditions}
		\left\{
			\begin{aligned}
				P_i 
					&{}\in \bigl\{	\begin{pmatrix}1&0\end{pmatrix},
											\begin{pmatrix}0&1\end{pmatrix}\bigr\} &
				Q_j &{}\in J_{1,2}\cup J_{2,3}\cup J_{1,4} \\
				g_i(\bar x) < 0   &\quad\Rightarrow\quad P_i = \begin{pmatrix}0&1\end{pmatrix}, &
				\bar\lambda_i > 0 &\quad\Rightarrow\quad P_i = \begin{pmatrix}1&0\end{pmatrix}, \\
				G_j(\bar x) > 0   &\quad\Rightarrow\quad Q_j \in J_{2,3}, &
				H_j(\bar x) > 0   &\quad\Rightarrow\quad Q_j \in J_{1,4}, \\
				\bar\mu_j \ne 0   &\quad\Rightarrow\quad Q_j \in J_{1,2} \cup J_{1,4}, &
				\bar\nu_j \ne 0   &\quad\Rightarrow\quad Q_j \in J_{1,2} \cup J_{2,3}
				\mspace{-32mu}
			\end{aligned}
			\qquad
		\right.
	\end{equation}
	holds for all $i \in I^\ell$, $j \in I^p$,
	cf.\ \eqref{eq:nice_jacbians}.
	Associated with this choice of $P_i$, $Q_j$, we define the index sets
	\begin{align*}
		I^\ell_1 &:= \left\{i \in I^\ell \,\middle|\,P_i = \begin{pmatrix}1&0\end{pmatrix}\right\}
		,&
		I^\ell_2 &:= \left\{i \in I^\ell\,\middle|\,P_i = \begin{pmatrix}0&1\end{pmatrix}\right\}
		,
		\\
		I^p_{1,2} &:= \set{j \in I^p \given Q_j \in J_{1,2}}
		,&
		I^p_{1,4} &:= \set{j \in I^p \given Q_j \in J_{1,4}}
		,\\
		I^p_{2,3} &:= \set{j \in I^p \given Q_j \in J_{2,3}}
		.
	\end{align*}
	Now, 
	we consider the linear system
	with unknowns $\delta z = (\delta x, \delta\lambda, \delta\eta, \delta\mu, \delta\nu)$
	\begin{subequations}
		\label{eq:a_system_too_large_for_me}
		\begin{align}
			\nabla_{xx}^2 \LL(\bar z) \, \delta x
			+
			g'(\bar x)^\top \, \delta\lambda
			+
			h'(\bar x)^\top \, \delta\eta&
			\notag
			\\
			+
			G'(\bar x)^\top \, \delta\mu
			+
			H'(\bar x)^\top \, \delta\nu
			&=
			r,
			\label{eq:a_system_too_large_for_me_1}
			\\
			P_i \,
			\begin{pmatrix}
				-\nabla g_i(\bar x)^\top \delta x\\
				\delta\lambda_i
			\end{pmatrix}
			&=
			s_i
			\quad\forall i \in I^\ell,
			\label{eq:a_system_too_large_for_me_2}
			\\
			h'(\bar x) \, \delta x &=
			t,
			\label{eq:a_system_too_large_for_me_3}
			\\
			Q_j \,
			\begin{pmatrix}
				\nabla G_j(\bar x)^\top \delta x\\
				\nabla H_j(\bar x)^\top \delta x\\
				\delta \mu_j \\
				\delta \nu_j
			\end{pmatrix}
			&=
			\begin{pmatrix}
				u_j \\ v_j
			\end{pmatrix}
			\quad \forall j \in I^p
			,
			\label{eq:a_system_too_large_for_me_4}
		\end{align}
	\end{subequations}
	where $r\in\R^n$, $s\in\R^\ell$, $t\in\R^m$, and $u,v\in\R^p$
	form a given right-hand side.
	Let us inspect the block \eqref{eq:a_system_too_large_for_me_2}.
	In case $i \in I^\ell_2$, this block is equivalent to
	$\delta\lambda_i = s_i$.
	Hence, we can eliminate these variables.
	Now, we consider the last block \eqref{eq:a_system_too_large_for_me_4}.
	In case $j \in I^p_{1,2}$, i.e., $Q_j \in J_{1,2}$, we can assume w.l.o.g.\ that
	\begin{equation*}
		Q_j =
		\begin{pmatrix}
			1 & 0 & 0 & 0 \\
			0 & 1 & 0 & 0
		\end{pmatrix}
		.
	\end{equation*}
	Hence, the $j$th component of the last block is equivalent to
	\begin{equation*}
		\nabla G_j(\bar x)^\top \delta x = u_j
		\quad\text{and}\quad
		\nabla H_j(\bar x)^\top \delta x = v_j
		.
	\end{equation*}
	For $j \in I^p_{1,4}$, the last block is w.l.o.g.\ equivalent to
	\begin{equation*}
		\nabla G_j(\bar x)^\top \delta x = u_j
		\quad\text{and}\quad
		\delta\nu_j = v_j
	\end{equation*}
	and for $j \in I^p_{2,3}$ we get
	\begin{equation*}
		\nabla H_j(\bar x)^\top \delta x = v_j
		\quad\text{and}\quad
		\delta\mu_j = u_j
		.
	\end{equation*}
	Thus, the values $\delta\nu_j$ for $j \in I^p_{1,4}$
	and $\delta\mu_j$ for $j \in I^p_{2,3}$ can be eliminated in the above system.
	With the index sets
	\begin{equation*}
		I^p_\mu := I^p \setminus I^p_{2,3}
		,\qquad
		I^p_\nu := I^p \setminus I^p_{1,4}
	\end{equation*}
	we arrive at the reduced saddle-point system
	\begin{align*}
		\nabla_{xx}^2 \LL(\bar z) \, \delta x
		+
		g'(\bar x)^\top_{I^\ell_1} \, \delta\lambda_{I^\ell_1}
		+
		h'(\bar x)^\top \, \delta\eta
		+
		G'(\bar x)^\top_{I^p_\mu} \, \delta\mu_{I^p_\mu}
		+
		H'(\bar x)^\top_{I^p_\nu} \, \delta\nu_{I^p_\nu}
		&=
		\tilde r,
		\\
		-g'(\bar x)_{I^\ell_1} \, \delta x &= s_{I^\ell_1},
		\\
		h'(\bar x) \, \delta x &=
		t,
		\\
		G'(\bar x)_{I^p_\mu} \delta x &= u_{I^p_\mu},
		\\
		H'(\bar x)_{I^p_\nu} \delta x &= u_{I^p_\nu}
		.
	\end{align*}
	Therein, the modified right-hand side $\tilde r$
	results from the elimination of some of the multipliers.
	It
	can be bounded by
	$r$, $s$, $u$, and $v$.
	Note that this reduced system is symmetric.
	Furthermore, it clearly holds
	\begin{subequations}
		\label{eq:properties_index_sets}
		\begin{align}
			\{i\in I^g(\bar x)\,|\,\bar\lambda_i>0\}
			\subset
			I^\ell_1&\subset I^g(\bar x),
			\\
			I^{0+}(\bar x)\cup I^{00}_{\pm\R}(\bar x,\bar\mu,\bar\nu)
			\subset
			I^p_{\mu}
			&
			\subset I^{0+}(\bar x)\cup I^{00}(\bar x),
			\\
			I^{+0}(\bar x)\cup\ I^{00}_{\R\pm}(\bar x,\bar\mu,\bar\nu)
			\subset
			I^p_{\nu}
			&
			\subset
			I^{+0}(\bar x)\cup I^{00}(\bar x)
		\end{align}
	\end{subequations}
	by definition of these index sets.
	Additionally, we have $I^p_\mu\cup I^p_\nu=I^p$
	due to \cref{lem:nice_jacobians}.
	By MPCC-LICQ, the constraint block of the reduced system is surjective
	and
	from MPCC-SSOC we get that
	the matrix $\nabla_{xx}^2\LL(\bar z)$ is positive definite
	on the kernel of the constraint block.
	Now, \cref{lem:saddle_point_system}
	implies the invertibility of the system.
	By undoing the elimination of some of the multipliers,
	we find that the system \eqref{eq:a_system_too_large_for_me} is
	invertible, i.e.,
	there is a constant $c > 0$, such that the unique solution $\delta z$ of
	\eqref{eq:a_system_too_large_for_me}
	satisfies
	$\norm{\delta z} \le c \, \paren{\norm{r} + \norm{s} + \norm{t} + \norm{u} + \norm{v}}$.
	Note that the constant $c$ might depend on the matrices $P_i$ and $Q_j$.
	However, since there are only finitely many choices
	for the matrices $P_i$ and $Q_j$,
	we can choose $c$ large enough,
	such that this estimate holds for all
	values of $P_i$ and $Q_j$ satisfying \eqref{eq:matrix_conditions}.

		At this point of the proof, we have shown the following:
		There is a constant $c$, such that the linear system
		\eqref{eq:a_system_too_large_for_me} is uniformly invertible
		for any choice of matrices $P_i$, $Q_j$ satisfying \eqref{eq:matrix_conditions}.

	It remains to prove the uniform invertibility of the Newton matrix $DF(z)$
	for all $z$ in a neighborhood of $\bar z$.
	First, we can utilize \cref{lem:nice_jacobians}
	and the continuity of $g$, $G$, and $H$
	to obtain $\varepsilon > 0$ such that
	$P_i := D\pi_\textup{min}( -g_i(x), \lambda_i)$
	and
	$Q_j := D\varphi( G_j(x), H_j(x), \mu_j, \nu_j)$
	satisfy \eqref{eq:matrix_conditions}
	for all $i \in I^\ell$, $j \in I^p$,
	and $z\in B_\eps(\bar z)$.
	Note that we still use $\bar x$, $\bar\lambda$, $\bar\mu$, and $\bar\nu$ in \eqref{eq:matrix_conditions}.
	Thus, the Newton matrix $DF(z)$
	is a perturbation of the system matrix from \eqref{eq:a_system_too_large_for_me}
	for this particular choice of the matrices $P_i$ and $Q_j$.
	Since $f$, $g$, $h$, $G$, and $H$ are assumed to be twice continuously differentiable,
	the perturbation can be made arbitrarily small (by reducing $\varepsilon$ if necessary).
	Thus, \cref{lem:neumann_series}
	ensures that we get a uniform bound for
	$DF(z)^{-1}$ for all $z \in B_\varepsilon(\bar z)$.
\end{proof}

Now, we are in position to state a local convergence result for our nonsmooth
Newton method based on the map $F$ from \eqref{eq:def_F}. Its proof simply
follows by combining 
\cref{thm:newton_method,thm:uniform_invertibility}.
\begin{theorem}\label{thm:local_convergence}
	Let $\bar x\in\R^n$ be an M-stationary point of \eqref{eq:MPCC}
	where MPCC-LICQ holds.
	Furthermore, assume that MPCC-SSOC holds at $\bar x$ w.r.t.\ the multipliers
	$(\bar \lambda,\bar\eta,\bar\mu,\bar\nu)\in\Lambda^\textup{M}(\bar x)$.
	Set $\bar z:=(\bar x,\bar\lambda,\bar\eta,\bar\mu,\bar\nu)$.
	Then there exists $\delta>0$ such that the nonsmooth Newton method
	from \cref{sec:nonsmooth_Newton}
	applied to the mapping $F$ from \eqref{eq:def_F} is well defined 
	for each initial iterate from $B_\delta(\bar z)$ while the associated
	sequence of iterates converges superlinearly to $\bar z$.
	If, additionally, the second-order derivatives of the data functions
	$f$, $g$, $h$, $G$, and $H$ are locally Lipschitz
	continuous, then the convergence is quadratic.
\end{theorem}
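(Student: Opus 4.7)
The plan is to reduce the statement to a direct application of \cref{thm:newton_method}, whose hypotheses are verified with the aid of \cref{thm:uniform_invertibility}. First, I would record that $\bar z$ is a zero of $F$: the $\nabla_x\LL$-block vanishes by \eqref{eq:M_St_x}; the block with entries $\pi_{\textup{min}}(-g_i(\bar x),\bar\lambda_i)$ vanishes because \eqref{eq:M_St_lambda} together with feasibility encodes the standard complementarity triple that is the root set of $\pi_{\textup{min}}$; the block $h(\bar x)$ vanishes by feasibility; and each block $\varphi(G_i(\bar x),H_i(\bar x),\bar\mu_i,\bar\nu_i)$ vanishes by \eqref{eq:M_St_mu}--\eqref{eq:M_St_I00} together with \cref{lem:phi_properties}\,\ref{item:phi_characterizes_m_stat}.

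Next, I would argue Newton differentiability of $F$ on a neighborhood of $\bar z$ sufficient for \cref{thm:newton_method}. Since $f,g,h,G,H$ are $C^2$, the blocks $\nabla_x\LL$ and $h$ are Newton differentiable (with their classical Jacobians). The $\pi_{\textup{min}}$-block and the $\varphi$-block are compositions of smooth maps with $\pi_{\textup{min}}$ and $\varphi$, respectively. By \cref{ex:Newton_differentiability_of_min_max} the function $\pi_{\textup{min}}=\min(\cdot,\cdot)$ is Newton differentiable of order $\infty$, and \cref{lem:phi_properties}\,\ref{item:phi_newton_differentiable} gives the same for $\varphi$ at points of $M$. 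Since M-stationarity of $\bar x$ places each quadruple $(G_i(\bar x),H_i(\bar x),\bar\mu_i,\bar\nu_i)$ into $M$, the chain rule from \cref{lem:chain_rule} (whose boundedness hypotheses are satisfied because $D\varphi$ only takes the finitely many values identified in \cref{lem:nice_jacobians} on a neighborhood of these points, and similarly for $D\pi_{\textup{min}}$) yields Newton differentiability of $F$ on a neighborhood of $\bar z$.

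With these two ingredients in hand, \cref{thm:uniform_invertibility} provides $\varepsilon>0$ and a uniform bound $\norm{DF(z)^{-1}}\le C$ for all $z\in B_\varepsilon(\bar z)$. This is precisely the remaining hypothesis of \cref{thm:newton_method}, which thus delivers a $\delta>0$ such that the iteration is well defined and converges superlinearly starting from any $x_0\in B_\delta(\bar z)$. For quadratic convergence under the additional local Lipschitz assumption on the second-order derivatives of $f,g,h,G,H$, the classical Jacobians of $\nabla_x\LL$ and $h$ become locally Lipschitz, hence these blocks are Newton differentiable of order $1$. Since $\pi_{\textup{min}}$ and $\varphi$ remain Newton differentiable of order $\infty$ (in particular of order $1$), the order-preserving version of \cref{lem:chain_rule} upgrades $F$ to Newton differentiability of order $1$, and the second part of \cref{thm:newton_method} yields quadratic convergence.

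The only nontrivial pieces of the argument are already isolated in earlier statements: the uniform invertibility bound in \cref{thm:uniform_invertibility} (which is where MPCC-LICQ and MPCC-SSOC actually enter) and the Newton-differentiability analysis of $\varphi$ in \cref{lem:phi_properties}. Consequently the bookkeeping that remains — checking feasibility-based vanishing of $F(\bar z)$ and assembling the chain rule across the five blocks of $F$ — is routine, and I would expect the write-up to be correspondingly short.
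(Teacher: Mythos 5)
Your proposal takes the same route as the paper, which explicitly reduces the theorem to a combination of \cref{thm:newton_method} and \cref{thm:uniform_invertibility}, and the bookkeeping you supply (verifying $F(\bar z)=0$, Newton differentiability via \cref{lem:chain_rule} and \cref{lem:phi_properties}, then the uniform invertibility bound) is exactly what the paper leaves implicit. One small overstatement: you claim Newton differentiability of $F$ \emph{on a neighborhood} of $\bar z$, but since \cref{lem:phi_properties} gives Newton differentiability of $\varphi$ only on $M$ (a set with empty interior in $\R^4$), the chain rule yields Newton differentiability of $F$ only at its roots; this includes $\bar z$ and is all that the hypothesis of \cref{thm:newton_method} actually requires, so the argument still goes through.
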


\begin{remark}\label{rem:one_step_convergence_for_linear_quadratic_problems}
	In addition to the assumptions of \cref{thm:local_convergence},
	assume that the cost function $f$ is quadratic while the constraint
	mappings $g$, $h$, $G$, and $H$ are affine in \eqref{eq:MPCC}.
	Then \cref{ex:Newton_differentiability_of_min_max}, \cref{lem:chain_rule},
	and \cref{lem:phi_properties} guarantee that the mapping $F$ from
	\eqref{eq:def_F} is Newton differentiable of order $\infty$ on $M$.
	Thus, \cref{thm:newton_method} guarantees one-step convergence
	for the associated nonsmooth Newton method if the initial iterate
	is sufficiently close to the reference point $\bar z$.
\end{remark}

We note that the example from \cite[Section 7.3]{FletcherLeyfferRalphScholtes2006}
satisfies our assumptions MPCC-LICQ and MPCC-SSOC at the origin which is an
S-stationary point of the underlying complementarity-constrained optimization
problem. Due to \cref{thm:local_convergence}, local superlinear convergence
of our nonsmooth Newton method is guaranteed. 
On the other hand, the SQP-method suggested in \cite{FletcherLeyfferRalphScholtes2006}
only converges linearly to the point of interest.
	Next, we want to compare \cref{thm:local_convergence} with the convergence
	results from \cite{GuoLinYe2015} where a Levenberg--Marquardt method has
	been used to find stationary points of \eqref{eq:MPCC}. 
	In \cite[Theorem~4.2]{GuoLinYe2015}, local quadratic convergence of the method
	has been shown under validity of an abstract error bound condition.
	However, the paper does not present any assumptions in terms of initial problem
	data which ensure the presence of the error bound condition. 
	Even worse, the authors admit in \cite[Section~5]{GuoLinYe2015} that sufficient
	conditions for the validity of this error bound condition, which are well known
	from the literature, are likely to be violated in their special setting.
	Some toy examples are provided where validity of the error bound condition can be shown
	via some reduction arguments.
	In this regard, a qualitative comparison with the method from \cite{GuoLinYe2015}
	is not possible. For a quantitative comparison, we refer the interested reader
	to \cref{sec:numerics}.

We mention that it is possible to
interpret the Newton method as an active set strategy.
To this end, one has to utilize \cref{lem:for_interpretation_as_AS}.
If the current iterate is denoted by
$z_k = (x_k, \lambda_k, \eta_k, \mu_k, \nu_k)$,
the next iterate
$z_{k+1} = (x_{k+1}, \lambda_{k+1}, \eta_{k+1}, \mu_{k+1}, \nu_{k+1})$
solves
the symmetric linear system
\begin{align*}
	\nabla_x \LL(z_k) + \nabla_{xx}^2 \LL(z_k) \, (x_{k+1}-x_k)
	\qquad \qquad \qquad \qquad \qquad
	&\\
	{}+
	g'(x_k)^\top_{I^\ell_1} \, (\lambda_{k+1}-\lambda_k)_{I^\ell_1}
	+
	h'(x_k)^\top \, (\eta_{k+1}-\eta_k)
	\mspace{32mu}
	&\\
	{}+
	G'(x_k)^\top_{I^p_\mu} \, (\mu_{k+1}-\mu_k)_{I^p_\mu}
	+
	H'(x_k)^\top_{I^p_\nu} \, (\nu_{k+1}-\nu_k)_{I^p_\nu}
	&=
	0,
	\\
	g(x_k)_{I^\ell_1} + g'(x_k)_{I^\ell_1} \, (x_{k+1} - x_k)_{I^\ell_1} &= 0,
	&
	(\lambda_{k+1})_{I^\ell \setminus I^\ell_1} &= 0,
	\\
	h(x_k) + h'(x_k) \, (x_{k+1} - x_k) &= 0,
	\\
	G(x_k)_{I^p_\mu} + G'(x_k)_{I^p_\mu} \, (x_{k+1} - x_k)_{I^p_\mu} &= 0,
	&
	(\mu_{k+1})_{I^p \setminus I^p_\mu} &= 0,
	\\
	H(x_k)_{I^p_\nu} + H'(x_k)_{I^p_\nu} \, (x_{k+1} - x_k)_{I^p_\nu} &= 0,
	&
	(\nu_{k+1})_{I^p \setminus I^p_\nu} &= 0
	.
\end{align*}
Here, the index sets
$I^\ell_1$, $I^p_\mu$, and $I^p_\nu$
are constructed similarly as in the proof of \cref{thm:uniform_invertibility}.

Let us briefly compare our algorithm with
\cite[Alg.~2.2]{IzmailovSolodov2008}.
Therein, the authors use an identification procedure
to obtain the active sets $I^{+0}(\bar x)$, $I^{0+}(\bar x)$, and $I^{00}(\bar x)$
and, afterwards, $\bar x$ is approximated by an active set strategy.
This approach is very similar to our suggestion.
For the convergence theory
they presume validity of MPCC-LICQ
and MPCC-SOSC at a given local minimizer of \eqref{eq:MPCC} which, thus,
is an S-stationary point 
(observe that MPCC-SOSC is called \emph{piecewise SOSC} in \cite{IzmailovSolodov2008},
and take notice of \cite[pages~1006-1007]{IzmailovSolodov2008}).
Recall that MPCC-SOSC is slightly weaker than MPCC-SSOC, which is required in our \cref{thm:local_convergence}.
However, the algorithm from \cite{IzmailovSolodov2008}
is designed for the computation of S-stationary points
and cannot approximate M-stationary points (which are not already strongly stationary).

\section{Globalization}
\label{sec:glob}

A possible idea for
the globalization of the nonsmooth Newton method from \cref{sec:nonsmooth_Newton}
is to exploit the squared residual of $F$ as a merit function. 
Unfortunately, it can be easily checked that the resulting map 
$z\mapsto\tfrac12\norm{F(z)}^2$ is not smooth.
Exploiting the well-known fact that the square of the Fischer--Burmeister
function $\pi_\textup{FB}$ is smooth, see, e.g., 
\cite[Proposition~3.4]{FacchineiSoares1997}, we are, however, in position to construct
a smooth merit function.
Therefore, let us first mention that 
the function $\varphi_1$ has the equivalent representation
\begin{subequations}
	\label{eq:theta_def}
	\begin{align}
		\label{eq:theta_def_1}
		\theta_1(a,b,\mu,\nu) &:= \abs{\min(a,b)},
		\\
		\label{eq:theta_def_2}
		\theta_2(a,b,\mu,\nu) &:= \min(\abs a,\abs\mu),
		\\
		\label{eq:theta_def_3}
		\theta_3(a,b,\mu,\nu) &:= \min(\abs b,\abs\nu),
		\\
		\label{eq:theta_def_4}
		\theta_4(a,b,\mu,\nu) &:= \max(0,\min(\mu,\abs\nu),\min(\nu,\abs\mu)),
		\\
		\label{eq:old_phi_def_1}
		\phi_1(a,b,\mu,\nu) &= \max_{i=1,\ldots,4} \theta_i(a,b,\mu,\nu).
	\end{align}
\end{subequations}
Indeed, one can check that the zeros of $\theta(a,b,\mu,\nu):=\max_{i=1,\ldots,4}\theta_i(a,b,\mu,\nu)$
coincide with the set $M$ and, by construction, $\theta$ is $1$-Lipschitz continuous w.r.t.\
the $\ell^\infty$-norm. Based on these observations and an elementary distinction of cases, 
it is now possible to exploit the particular structure of $\theta$ in order to show 
that this function equals the $\ell^\infty$-distance to $M$. 
Noting that $\varphi_1$ from \eqref{eq:phi_def_1} has the same property, $\theta$ and $\varphi_1$
actually need to coincide.
This motivates the definition of
\begin{subequations}
	\label{eq:thetaFB_def_all}
	\begin{align}
		\label{eq:thetaFB_def_1}
		\theta_{1,\textup{FB}}(a,b,\mu,\nu) &:= |\pi_{\textup{FB}}(a,b)|,
		\\
		\label{eq:thetaFB_def_2}
		\theta_{2,\textup{FB}}(a,b,\mu,\nu) &:= \pi_{\textup{FB}}(\abs a,\abs\mu),
		\\
		\label{eq:thetaFB_def_3}
		\theta_{3,\textup{FB}}(a,b,\mu,\nu) &:= \pi_{\textup{FB}}(\abs b,\abs\nu),
		\\
		\label{eq:thetaFB_def_4}
		\theta_{4,\textup{FB}}(a,b,\mu,\nu) &:= 
		\begin{cases}
			0 & \text{if } \mu,\nu \le 0, \\
			\pi_{\textup{FB}}(\abs{\mu},\abs{\nu}) & \text{else},
		\end{cases}
		\\
		\label{eq:thetaFB_def}
		\theta_{\textup{FB}}(a,b,\mu,\nu) &:=
		[
			\theta_{i,\textup{FB}}(a,b,\mu,\nu)
		]_{i = 1,\ldots,4}.
	\end{align}
\end{subequations}
Now, we introduce
$F_{\textup{FB}}\colon\R^n\times\R^\ell\times\R^m\times\R^p\times\R^p\to \R^n\times\R^\ell\times\R^m\times\R^{4p}$,
a modified residual,
as stated below for arbitrary $x\in\R^n$, $\lambda\in\R^\ell$, $\eta\in\R^m$, and $\mu,\nu\in\R^p$:
\begin{equation}
	\label{eq:def_FFB}
	F_{\textup{FB}}(x,\lambda,\eta,\mu,\nu)
	:=
	\begin{bmatrix}
		\nabla_x\LL(x,\lambda,\eta,\mu,\nu)
		\\
		[
			\pi_\textup{FB}( -g_i(x), \lambda_i ) 
		]_{I^\ell}
		\\
		h(x)
		\\
		[
			\theta_{\textup{FB}}( G_i(x), H_i(x), \mu_i, \nu_i )
		]_{I^p}
	\end{bmatrix}.
\end{equation}
In the next lemma, we show that the squared residuals of $F$ and $F_\textup{FB}$ are, in some sense, equivalent.
\begin{lemma}
	\label{lem:equivalence}
	There exist constants $c,C>0$ with
	\begin{equation*}
		c \, \norm{ F_{\textup{FB}}(x,\lambda,\eta,\mu,\nu) }^2
		\le
		\norm{ F(x,\lambda,\eta,\mu,\nu) }^2
		\le
		C \, \norm{ F_{\textup{FB}}(x,\lambda,\eta,\mu,\nu) }^2
	\end{equation*}
	for all $x\in\R^n$, $\lambda\in\R^\ell$, $\eta\in\R^m$, and $\mu,\nu\in\R^p$.
\end{lemma}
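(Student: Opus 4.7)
The plan is to reduce \cref{lem:equivalence} to pointwise equivalences per index. Since $F$ and $F_{\textup{FB}}$ share the first and third blocks (namely $\nabla_x\LL(\cdot)$ and $h(\cdot)$), the contributions of these blocks to $\norm{F}^2$ and $\norm{F_{\textup{FB}}}^2$ cancel in the ratio. The remaining two blocks are sums over $i \in I^\ell$ or $i \in I^p$ of terms that depend only on the data at a single index $i$. Hence it suffices to establish uniform pointwise equivalences
\[
	\abs{\pi_{\min}(a,b)}^2 \asymp \abs{\pi_{\textup{FB}}(a,b)}^2 \text{ on } \R^2,
	\qquad
	\norm{\varphi(a,b,\mu,\nu)}^2 \asymp \norm{\theta_{\textup{FB}}(a,b,\mu,\nu)}^2 \text{ on } \R^4.
\]

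The first equivalence is the classical one between the minimum- and Fischer--Burmeister-NCP-functions. Both are positively homogeneous of degree one and share the zero set $\set{(a,b) \given a,b \ge 0,\, ab = 0}$, so a compactness argument on the unit sphere in $\R^2$ yields uniform constants; I would simply cite this standard fact.

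For the second equivalence, I would first exploit the identity $\varphi_1 = \max_{i=1,\ldots,4}\theta_i$ from \eqref{eq:old_phi_def_1} together with pointwise equivalences $\theta_i \asymp \abs{\theta_{i,\textup{FB}}}$ for $i \in \set{1,2,3,4}$. Each such equivalence follows from positive homogeneity in the relevant variables and matching zero sets, again via compactness; for $i = 4$, the case $\mu,\nu \le 0$ must be handled separately (both sides vanish by construction). The $\ell^\infty$--$\ell^2$ norm equivalence in $\R^4$ then gives $\varphi_1^2 \asymp \sum_i \theta_{i,\textup{FB}}^2 = \norm{\theta_{\textup{FB}}}^2$. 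To complete the chain, I would prove the pointwise bound $\abs{\varphi_2(a,b,\mu,\nu)} \le \varphi_1(a,b,\mu,\nu)$, which together with $\norm{\varphi}^2 = \varphi_1^2 + \varphi_2^2$ yields $\varphi_1^2 \le \norm{\varphi}^2 \le 2\varphi_1^2$.

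The bound $\abs{\varphi_2} \le \varphi_1$ is established by the case distinction $D\varphi_1 \in \set{\pm e_i^\top \given i = 1,2,3,4}$ appearing in \eqref{eq:def_phi_2}. The Newton derivative records which of $\psi_1,\psi_2,\psi_3$ realizes the outer minimum and which argument of that $\psi$ realizes its inner maximum; the remaining arguments of that $\max$ are therefore dominated by $\varphi_1$, and a short inspection (in the same spirit as \cref{lem:nice_jacobians}) shows that the candidate chosen for $\varphi_2$ is either one of those dominated arguments or $\le$ one of them. For instance, in the case $D\varphi_1 = \pm e_1^\top$ the definition forces $\varphi_2 = \min(\abs{b},\abs{\nu})$; tracing back through the three possible realizations of $\varphi_1$ in terms of $\psi_1,\psi_2,\psi_3$ shows that in each $\min(\abs{b},\abs{\nu}) \le \varphi_1$. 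The other three cases are analogous. The main obstacle is precisely this enumeration: while each individual subcase is routine, one must keep track of the combinations of active indices in the outer $\min$ and inner $\max$ to be sure $\varphi_2$ never escapes the bound, and this is where the careful choices of Newton derivatives made in \cref{ex:Newton_differentiability_of_min_max,ex:Newton_differentiability_of_absolute_value} are essential.
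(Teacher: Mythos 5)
Your proposal matches the paper's proof in all essential respects: both reduce to the blockwise equivalences $\pi_{\min} \asymp \pi_{\textup{FB}}$ and $\varphi \asymp \theta_{\textup{FB}}$, both exploit the alternative representation $\varphi_1 = \max_i \theta_i$ together with termwise equivalences $\theta_i \asymp \theta_{i,\textup{FB}}$ and $\ell^\infty$--$\ell^2$ norm comparison, and both hinge on the pointwise bound $\abs{\varphi_2} \le \varphi_1$ established via a case distinction on $D\varphi_1$. The only superficial difference is that the paper cites Tseng's explicit constants $\tfrac{2}{2+\sqrt2}$, $2+\sqrt2$ for the NCP equivalence whereas you invoke a compactness argument, but this does not change the structure of the argument.
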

\begin{proof}
	Throughout the proof, $w:=(a,b,\mu,\nu)\in\R^4$ is arbitrarily chosen.
	Due to \cite[Lemma~3.1]{Tseng1996}, the functions $\pi_{\textup{min}}$ and $\pi_{\textup{FB}}$
	are equivalent in the sense
	\begin{equation}\label{eq:equivalence_of_min_and_FB}
		\frac{2}{2+\sqrt 2}\,\abs{\pi_\textup{min}(a,b)}
		\leq
		|\pi_\textup{FB}(a,b)|
		\leq
		(2+\sqrt 2)\,\abs{\pi_\textup{min}(a,b)}.
	\end{equation}
	Thus, keeping in mind the definitions of $F$ and $F_\textup{FB}$ from
	\eqref{eq:def_F} and \eqref{eq:def_FFB}, we only need
	to show the equivalence of $\varphi$ and $\theta_\textup{FB}$.
	
	The relation \eqref{eq:equivalence_of_min_and_FB} yields
	\begin{equation*}
		\frac{2}{2 + \sqrt{2}} \,
		\abs{ \theta_i(w) }
		\le
		\abs{ \theta_{i,\textup{FB}}(w) }
		\le
		(2 + \sqrt{2}) \,
		\abs{ \theta_i(w) }
	\end{equation*}
	for $i = 1,\ldots, 4$.
	Together with the estimate
	\begin{equation*}
		\varphi_1(w)
		=
		\max_{i=1,\ldots,4} \theta_i(w)
		\le
		\left(\sum_{i = 1}^4 \theta_i^2(w)\right)^{1/2}
		\le
		2 \, \varphi_1(w),
	\end{equation*}
	we get equivalence of the functions
	$\abs{\varphi_1}$
	and
	$\norm{\theta_{\textup{FB}}}$.

	In order to complete the proof, we only need to show
	\begin{equation}\label{eq:estimate_for_phi_2}
		\abs{\varphi_2(w)}
		\le
		\abs{\varphi_1(w)}
	\end{equation}
	for all $w$
	since this already yields the equivalence of $\phi$ and $\theta_\textup{FB}$.
	Let us distinguish some cases.
	If we have $D\varphi_1(w)\in\{\pm e_1^\top,\pm e_2^\top\}$, then it
	clearly holds 
	\begin{equation*}
		\abs{\varphi_2(w)}
		\le
		\max(\theta_2(w),\theta_3(w))
		\leq
		\max_{i=1,\ldots,4} \theta_i(w)
		=
		\varphi_1(w)
		.
	\end{equation*}
	Now, suppose that $D\varphi_1(w)=\pm e_3^\top$ holds. 
	The Newton derivative of $\varphi_1$ is evaluated
	based on the representation of $\varphi_1$ given in \eqref{eq:phi_def_1},
	and thus we obtain the relation $\varphi_1(w) = \min(\psi_1(w),\psi_3(w)) = \abs{\mu}$.
	This implies
	\begin{equation*}
		\varphi_2(w)
		=
		\abs{b}
		\le
		\min( \psi_1(w), \psi_3(w) )
		=
		\varphi_1(w).
	\end{equation*}
	The case $D\varphi_1(w)=\pm e_4^\top$ can be handled analogously.
	This shows \eqref{eq:estimate_for_phi_2} for arbitrary $w$
	and the proof is complete.
\end{proof}

For the globalization of our nonsmooth Newton method, we make use of the merit function 
$\Phi_\textup{FB}\colon\R^n\times\R^\ell\times\R^m\times\R^p\times\R^p\to\R$
given by
\begin{equation*}
	\PhiFB(z)
	:=
	\frac12 \, \norm{F_{\textup{FB}}(z)}^2
\end{equation*}
for all $z=(x,\lambda,\eta,\mu,\nu)\in\R^n\times\R^\ell\times\R^m\times\R^p\times\R^p$.
This function is continuously differentiable:
First, recall that the square of the function $\pi_\textup{FB}$ is
continuously differentiable.
The gradient of $(a,b) \mapsto \pi_{\textup{FB}}(a , b)^2$
vanishes on the complementarity angle $\set{(a,b) \given 0 \le a \perp b \ge 0}$.
This implies the continuous differentiability of
$(a,b) \mapsto \pi_{\textup{FB}}(\abs{a} , \abs{b})^2$.
Similar arguments can be used to check the
continuous differentiability of the function
$\theta_{4,\textup{FB}}^2$.

Now, we can utilize the globalization idea from
\cite[Section~3]{DeLucaFacchineiKanzow2000} and
\cite[Algorithm~3.2]{IzmailovSolodov2008}:
If the Newton step $d_k$ can be computed
and satisfies
\begin{equation}
	\label{eq:newton_direction_very_nice}
	\frac{ \PhiFB( z_k + d_k)}{ \PhiFB( z_k )} \le q
\end{equation}
(with a fixed parameter $q \in (0,1)$),
we perform the Newton step $z_{k+1} = z_k + d_k$.
If the Newton system is not solvable or if its solution $d_k$
violates an angle test,
we instead use $d_k := -\nabla \PhiFB(z_k)$.
Afterwards, we use an Armijo line search to obtain the step length $\alpha_k$
and update the iterate via
$z_{k+1} = z_k + \alpha_k \, d_k$.
This globalization strategy is described
in \cref{alg:global}.
\begin{algorithm}
	\KwData{parameters $q,\tau_{\text{abs}},\rho, \sigma, \beta \in(0,1)$,
	starting point $z_0\in\R^{n+\ell+m+2p}$}
	Set $k=0$\;
	\While{$\norm{F(z_k)}>\tau_{\text{abs}}$}{
		Solve $DF(z_k)d_k=-F(z_k)$ for $d_k$\;
		\eIf{$d_k$ is well defined and ratio test \eqref{eq:newton_direction_very_nice} is satisfied}{
			Set $z_{k+1}=z_k+d_k$\;
		}{
			\If{$d_k$ is not well defined or $\nabla\PhiFB(z_k)^\top d_k > -\rho\norm{d_k}\norm{\nabla\PhiFB(z_k)}$}{
				Set $d_k = -\nabla\PhiFB(z_k)$\;
			}
			Determine $z_{k+1}=z_k+\alpha_kd_k$ using an Armijo line search for $\PhiFB$, i.e.,
			$\alpha_k = \beta^{i_k}$, where $i_k \in \N_0$ is the smallest non-negative integer with
		$\PhiFB(z_k + \beta^{i_k} \, d_k) \le \PhiFB(z_k) + \sigma \, \beta^{i_k} \, \nabla\PhiFB(z_k)^\top d_k$\;
		}
		Set $k = k+1$\;
	}
	\caption{Globalization of the semismooth Newton method.}
	\label{alg:global}
\end{algorithm}

Due to \cref{lem:norm_equivalence,lem:equivalence}
and the proof of \cref{thm:newton_method},
the ratio test \eqref{eq:newton_direction_very_nice}
is satisfied
(and, consequently, the Newton steps are performed)
for all $z_k$
in the neighborhood of solutions satisfying the assumptions
of \cref{thm:uniform_invertibility}.
Consequently,
the convergence guarantees of \cref{alg:global}
follow along the lines of \cite[Section~3]{DeLucaFacchineiKanzow2000}, \cite[Thms.~3.4, 3.5]{IzmailovSolodov2008}:
	\begin{theorem}
		\label{thm:convergence}
		Let the sequence $(z_k)_{k \in \N}$ be given by \cref{alg:global}.
		\begin{enumerate}
			\item
				\label{item:nice_descent}
				If \eqref{eq:newton_direction_very_nice} is satisfied infinitely often,
				then $\PhiFB(z_k) \to 0$.
				In this case,
				any accumulation point of $(z_k)_{k\in\N}$ is a primal-dual M-stationary tuple.
			\item
				\label{item:stationary_point_of_merit_function}
				All accumulation points of $(z_k)_{k\in\N}$ are stationary points of
				$\PhiFB$.
			\item
				\label{item:convergence_of_whole_sequence}
				If an accumulation point $\bar z$ of $(z_k)_{k \in \N}$ satisfies the assumptions of
				\cref{thm:uniform_invertibility},
				then the entire sequence converges superlinearly towards $\bar z$.
				If, additionally, the second-order derivatives of the data functions
				$f$, $g$, $h$, $G$, and $H$ are locally Lipschitz
				continuous, then the convergence is quadratic.
		\end{enumerate}
	\end{theorem}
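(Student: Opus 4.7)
The plan is to treat the three claims in order, using \cref{lem:equivalence} as the bridge between $\PhiFB$ and $\norm{F}$, \cref{lem:phi_properties}(e) as the only continuity-like property of $F$ that is available, and the local analysis for claim (c) on top of \cref{thm:uniform_invertibility,thm:newton_method,lem:norm_equivalence}. Throughout, I would first observe that $\PhiFB(z_k)$ is monotonically non-increasing: at a ratio-test iteration we have $\PhiFB(z_{k+1})\le q\,\PhiFB(z_k)$ with $q<1$, while at an Armijo iteration $d_k$ is a descent direction (either by the angle test or by the fallback $d_k=-\nabla\PhiFB(z_k)$), so the Armijo rule forces $\PhiFB(z_{k+1})\le \PhiFB(z_k)$.

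For claim (a), if the ratio test is satisfied at indices $k_1<k_2<\dots$, then monotonicity together with the contraction at each $k_j$ yields $\PhiFB(z_{k_{j+1}})\le\PhiFB(z_{k_j+1})\le q\,\PhiFB(z_{k_j})$, hence $\PhiFB(z_{k_j})\le q^{j-1}\PhiFB(z_{k_1})\to 0$, and therefore $\PhiFB(z_k)\to 0$ along the whole sequence. \Cref{lem:equivalence} then yields $F(z_k)\to 0$. For any accumulation point $\bar z=(\bar x,\bar\lambda,\bar\eta,\bar\mu,\bar\nu)$, continuity of the smooth blocks of $F$ and of $\pi_{\textup{min}}$ gives $\nabla_x\LL(\bar z)=0$, $h(\bar x)=0$ and $\pi_{\textup{min}}(-g_i(\bar x),\bar\lambda_i)=0$ for all $i\in I^\ell$; applying \cref{lem:phi_properties}(e) componentwise to the $\varphi$-block places $(G_i(\bar x),H_i(\bar x),\bar\mu_i,\bar\nu_i)\in M$, and \cref{lem:phi_properties}(a) upgrades this to M-stationarity of $\bar x$ with multipliers $(\bar\lambda,\bar\eta,\bar\mu,\bar\nu)$.

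For claim (b), I would split into two cases. If the ratio test is satisfied infinitely often, claim (a) applies and every accumulation point is a zero of $\PhiFB$, hence stationary. Otherwise, from some index on every update is an Armijo step in a direction satisfying $\nabla\PhiFB(z_k)^\top d_k\le -\rho\,\norm{d_k}\,\norm{\nabla\PhiFB(z_k)}$, and a standard Zoutendijk-type argument for Armijo backtracking yields $\nabla\PhiFB(z_k)\to 0$ along convergent subsequences, exactly as in \cite[Sec.~3]{DeLucaFacchineiKanzow2000} and \cite[Thm.~3.4]{IzmailovSolodov2008}.

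For claim (c), fix an accumulation point $\bar z$ satisfying MPCC-LICQ and MPCC-SSOC. \Cref{thm:uniform_invertibility} gives uniform invertibility (and, after shrinking, boundedness) of $DF$ on some $B_\varepsilon(\bar z)$, and the proof of \cref{thm:newton_method} supplies $\delta\in(0,\varepsilon)$ such that a Newton step from $z_k\in B_\delta(\bar z)$ is well defined, halves the distance to $\bar z$, and satisfies $\norm{z_{k+1}-\bar z}=\oo(\norm{z_k-\bar z})$. Combining \cref{lem:norm_equivalence} with \cref{lem:equivalence} then yields, after possibly shrinking $\delta$, a two-sided bound
\[
    c_1\,\norm{z-\bar z}^2\le\PhiFB(z)\le c_2\,\norm{z-\bar z}^2
    \qquad\forall z\in B_\delta(\bar z),
\]
so $\PhiFB(z_{k+1})/\PhiFB(z_k)\to 0$ as $z_k\to\bar z$ and the ratio test \eqref{eq:newton_direction_very_nice} is satisfied on a sufficiently small ball around $\bar z$. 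Since $\bar z$ is an accumulation point, some iterate $z_K$ enters this ball; the ratio test then accepts the Newton step, which maps $B_\delta(\bar z)$ into $B_{\delta/2}(\bar z)$, and induction traps the entire tail of the sequence in $B_\delta(\bar z)$ while maintaining superlinear contraction. The main obstacle is precisely this bootstrap, namely proving simultaneously that the ratio test is accepted and that the accepted step keeps the iterate in the region where the next step will again be accepted; both pieces rely on the interplay between Newton contraction from \cref{thm:newton_method} and the quadratic sandwich on $\PhiFB$ just derived. Under local Lipschitz continuity of the second derivatives of $f$, $g$, $h$, $G$, $H$, the function $F$ is Newton differentiable of order $1$, and the corresponding clause of \cref{thm:newton_method} upgrades the rate to quadratic.
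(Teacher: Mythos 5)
Your proposal follows essentially the same route as the paper's proof. For statements~(a) and~(c) your arguments match the paper's in substance: the geometric decay plus monotonicity for (a) together with \cref{lem:equivalence}, the continuity of the smooth blocks of $F$ and $\pi_{\textup{min}}$, and \cref{lem:phi_properties}~(e) for the discontinuous $\varphi$-blocks; and for (c) the paper combines \cref{lem:norm_equivalence}, \cref{lem:equivalence}, and the contraction estimate from the proof of \cref{thm:newton_method} to get that on a small enough ball the ratio test is accepted \emph{and} the accepted Newton step stays in that ball, exactly the bootstrap you describe.

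The gap is in statement~(b). Calling what is needed there ``a standard Zoutendijk-type argument'' hides a case that such arguments do not cover. After the Armijo step and the angle test, the summability/limit of the descent gives only $\alpha_{k}\,\norm{d_{k}}\,\norm{\nabla\PhiFB(z_k)} \to 0$. Since the search directions are (accepted) Newton directions, they are not gradient-related: $\norm{d_k}$ may vanish much faster than $\norm{\nabla\PhiFB(z_k)}$, and in that regime the Zoutendijk-type estimate says nothing about $\nabla\PhiFB(z_k)$. The paper treats this as a separate case (their Case~1, $\liminf \norm{d_{k_l}} = 0$): if the small directions are Newton directions, then $F(z_{k_l}) = -DF(z_{k_l})\,d_{k_l}\to 0$ because $DF$ is bounded near $z^*$, and \cref{lem:phi_properties}~(e) together with \cref{lem:equivalence} then gives $\PhiFB(z^*) = 0$, hence $\nabla\PhiFB(z^*)=0$. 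This step genuinely uses the boundedness of the Newton derivative and the special closedness property of $\varphi$; it is not part of a generic line-search analysis. The remaining two cases (where either $\alpha_{k}\norm{d_{k}}$ stays bounded away from $0$, or $\norm{d_{k}}$ stays bounded away from $0$ while $\alpha_{k}\to 0$) are indeed the standard Armijo argument via uniform continuity of $\nabla\PhiFB$, which is what your sketch captures. So the proposal is correct in outline but would need the $d_k\to 0$ subcase spelled out along the paper's lines to be complete.
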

	\begin{proof}
		Statement~\ref{item:nice_descent} follows immediately since
				the sequence
				$(\PhiFB(z_k))_{k \in \N}$ is decreasing.

		Let us prove statement~\ref{item:stationary_point_of_merit_function}.
				In case where the assumption from statement~\ref{item:nice_descent} holds, this is clear.
				Otherwise, by considering the tail of the sequence,
				we may assume that \eqref{eq:newton_direction_very_nice} is never satisfied.
				Now, \cref{alg:global} reduces to a line-search method
				and the search directions satisfy the angle condition.

				For a convergent subsequence $z_{k_l} \to z^*$,
				we will show that $z^*$ is a stationary point of $\PhiFB$.
				To this end, we will distinguish three cases.

				\textit{Case 1}, $\liminf_{l \to \infty} \norm{d_{k_l}} = 0$:
				By selecting a further subsequence (without relabeling),
				we have $d_{k_l} \to 0$.
				In every step of the algorithm, we have
				\begin{equation*}
					DF(z_{k_l}) \, d_{k_l} = -F(z_{k_l})
					\qquad\text{or}\qquad
					d_{k_l} = -\nabla \PhiFB(z_{k_l}).
				\end{equation*}
				If the second equation holds infinitely often,
				we immediately get $\nabla\PhiFB(z^*) = 0$.
				Otherwise, the first equation holds infinitely often.
				The convergence $z_{k_l} \to z^*$
				implies that the Newton derivatives $DF(z_{k_l})$
				are bounded.
				This gives $F(z_{k_l}) \to 0$ (along a subsequence).
				Thus, $F(z^*) = 0$,
				see \cref{lem:phi_properties}~\ref{item:phi_cont_2},
				and therefore
				\cref{lem:equivalence} implies
				$\PhiFB(z^*) = 0$ which yields $\nabla\PhiFB(z^*) = 0$.

				\textit{Case 2}, $\liminf_{l \to \infty} \alpha_{k_l} \, \norm{d_{k_l}} > 0$:
				The sequence $(\PhiFB(z_k))_{k \in \N}$ is decreasing and bounded from below by zero,
				therefore it converges.
				The Armijo condition ensures
				$\alpha_k \, \nabla\PhiFB(z_k)^\top d_k \to 0$.
				In every step of the algorithm,
				the angle condition
				$\nabla\PhiFB(z_k)^\top d_k \le -\rho \, \norm{\nabla\PhiFB(z_k)} \, \norm{d_k}$
				is satisfied,
				thus we get
				$\alpha_k \, \norm{\nabla\PhiFB(z_k)} \, \norm{d_k} \to 0$.
				From the condition of Case~2,
				this implies $\nabla\PhiFB(z_{k_l}) \to 0$
				and, therefore, the stationarity of $z^*$.

				\textit{Case 3}, $\liminf_{l \to \infty} \norm{d_{k_l}} > 0$
				and
				$\liminf_{l \to \infty} \alpha_{k_l} \, \norm{d_{k_l}} = 0$:
				By picking a further subsequence (without relabeling),
				we get
				$\alpha_{k_l} \, \norm{d_{k_l}} \to 0$.
				Since $\norm{d_{k_l}}$
				is bounded away from zero,
				this also yields $\alpha_{k_l} \to 0$.
				In particular, for $l$ large enough,
				$\beta^{-1} \, \alpha_{k_l}$
				violates the Armijo condition,
				i.e.,
				\begin{equation*}
					\PhiFB( z_{k_l} + \beta^{-1} \, \alpha_{k_l} \, d_{k_l} )
					-
					\PhiFB( z_{k_l} )
					>
					\sigma \, \beta^{-1} \, \alpha_{k_l} \, \nabla\PhiFB(z_{k_l})^\top d_{k_l}
					.
				\end{equation*}
				Using the mean value theorem on the left-hand side,
				we get $\xi_l \in (0, \beta^{-1} \, \alpha_{k_l})$
				such that
				\begin{equation}
					\label{eq:armijo}
					\beta^{-1} \, \alpha_{k_l} \, \nabla\PhiFB(z_{k_l} + \xi_l \, d_{k_l})^\top d_{k_l}
					>
					\sigma \, \beta^{-1} \, \alpha_{k_l} \, \nabla\PhiFB(z_{k_l})^\top d_{k_l}
					.
				\end{equation}
				Note that $\xi_l \, d_{k_l} \to 0$.
				The continuous function $\nabla\PhiFB$ is uniformly continuous
				on compact neighborhoods of $z^*$.
				Thus, for every $\varepsilon > 0$,
				there is $L \in \N$ such that
				\begin{equation*}
					\forall l \ge L\colon\quad
					\norm{
						\nabla\PhiFB(z_{k_l})
						-
						\nabla\PhiFB(z_{k_l} + \xi_l \, d_{k_l})
					}
					\le \varepsilon.
				\end{equation*}
				Using this inequality in \eqref{eq:armijo},
				we get
				\begin{equation*}
					(1-\sigma) \, \nabla\PhiFB(z_{k_l})^\top d_{k_l}
					+
					\varepsilon \, \norm{d_{k_l}}
					>
					0
				\end{equation*}
				for all $l$ large enough.
				Exploiting the angle condition once more yields
				\begin{equation*}
					\varepsilon  > (1-\sigma) \, \rho \, \norm{\nabla\PhiFB(z_{k_l})}
				\end{equation*}
				for all $l$ large enough.
				Since $\varepsilon > 0$ was arbitrary,
				this gives
				$\norm{\nabla\PhiFB(z_{k_l})} \to 0$
				and, therefore, $z^*$ is a stationary point of $\PhiFB$.

			Finally, we show statement~\ref{item:convergence_of_whole_sequence}.
				From the proof of \cref{thm:newton_method} in conjunction with 
				\cref{lem:norm_equivalence,lem:equivalence},
				we obtain an $\varepsilon > 0$ such that
				\eqref{eq:newton_direction_very_nice}
				as well as $\norm{(z_k + d_k) - \bar z} \le \norm{z_k - \bar z}$
				are satisfied for all $z_k \in B_\varepsilon(\bar z)$.
				This means that the semismooth Newton step is accepted
				and the next iterate $z_{k+1} = z_k + d_k$ also belongs to $B_\varepsilon(\bar z)$.
				Consequently, \cref{alg:global} becomes a semismooth Newton method
				and the assertion follows from \cref{thm:newton_method}.
	\end{proof}
	Clearly, every primal-dual M-stationary tuple $\bar z$
	is a stationary point of $\PhiFB$ since $\PhiFB(\bar z) = 0$.
	However, giving assumptions under which the converse implication is true
	seems to be challenging,
	see also \cref{subsec:non-M}.
	For the solution of NCPs, this question has been adressed in \cite[Section~4]{DeLucaFacchineiKanzow1996}.

\section{Convergence for linear-quadratic problems beyond MPCC-LICQ}
\label{sec:linear_quadratic}

We consider the linear-quadratic case,
i.e., we assume that the function
$f$ is quadratic and that the mappings
$g$, $h$, $G$, and $H$ are affine.
Due to the complementarity constraints,
the solution of \eqref{eq:MPCC}
is still very challenging.
On the other hand, it follows from
\cite[Theorem~3.5, Proposition~3.8]{FlegelKanzow2005b}
that local minimizers of the associated problem
\eqref{eq:MPCC} are M-stationary without further
assumptions. This makes the search for M-stationary
points even more attractive.
Our goal is to verify that
one-step convergence of a modification of our Newton method
is possible under
a weaker constraint qualification than MPCC-LICQ.

Let an M-stationary point $\bar x\in\R^n$ of \eqref{eq:MPCC}
with multiplier 
$(\bar\lambda,\bar\rho,\bar\mu,\bar\nu)\in\Lambda^\textup{M}(\bar x)$
be given and set 
$\bar z = (\bar x, \bar\lambda, \bar\eta, \bar\mu, \bar\nu)$.
We require that
the matrix
\begin{equation}
	\label{eq:matrix_relaxed_MPCC_LICQ}
	\begin{bmatrix}
		g'(\bar x)_{I^g_+(\bar x,\bar\lambda)}
		\\
		h'(\bar x)
		\\
		G'(\bar x)_{I^{0+}(\bar x)\cup I^{00}_{\pm\R}(\bar x, \bar\mu, \bar\nu)}
		\\
		H'(\bar x)_{I^{+0}(\bar x)\cup I^{00}_{\R\pm}(\bar x, \bar\mu, \bar\nu)}
	\end{bmatrix}
\end{equation}
possesses full row rank,
where we used the multiplier-dependent index sets from \eqref{eq:more_index_sets}
and
\begin{equation*}
	I^g_+(\bar x,\bar\lambda)
	:=
	\set{
		i \in I^g(\bar x) \given \bar\lambda_i > 0
	}
	.
\end{equation*}
Clearly, this condition is, in general, weaker than MPCC-LICQ.
Further, we assume that
MPCC-SSOC holds
at $\bar x$
w.r.t.\ $(\bar\lambda, \bar\eta, \bar\mu, \bar\nu)$.

Let
$( x, \lambda, \eta, \mu, \nu)$
denote the current iterate.
We will assume that it is close to the solution
$(\bar x, \bar\lambda, \bar\eta, \bar\mu, \bar\nu)$.
Then arguing as in the proof of \cref{thm:uniform_invertibility},
using the active-set interpretation from the end of
\cref{sec:newton_method}
as well as the linear-quadratic structure of the problem,
the next iterate
$( x^+, \lambda^+, \eta^+, \mu^+, \nu^+)$
is given by solving the linear system
\begin{subequations}
	\label{eq:some_large_system}
	\begin{align}
		\nabla_x \LL(x^+, \lambda^+, \eta^+, \mu^+, \nu^+ )
		&=
		0,
		\\
		g(x^+)_{I^\ell_1} &= 0,
		&
		(\lambda^+)_{I^\ell \setminus I^\ell_1} &= 0,
		\\
		h(x^+) &= 0,
		\\
		G(x^+)_{I^p_\mu} &= 0,
		&
		(\mu^+)_{I^p \setminus I^p_\mu} &= 0,
		\\
		H(x^+)_{I^p_\nu} &= 0,
		&
		(\nu^+)_{I^p \setminus I^p_\nu} &= 0
		.
	\end{align}
\end{subequations}
Here, the index sets
$I^\ell_1$, $I^p_\mu$, and $I^p_\nu$
are constructed similarly as in the proof of \cref{thm:uniform_invertibility}
and satisfy \eqref{eq:properties_index_sets}.

In the following, we argue that the index sets
$I^\ell_1$, $I^p_\mu$, and $I^p_\nu$
can be modified such that
$\bar z = (\bar x, \bar\lambda, \bar\eta, \bar\mu, \bar\nu)$
is the unique solution of \eqref{eq:some_large_system}.
For arbitrary index sets
$I^\ell_1$, $I^p_\mu$, and $I^p_\nu$,
we have the implications
\begin{subequations}
	\label{eq:implication_linear_quadratic}
	\begin{align}
		\label{eq:implication_linear_quadratic_1}
		\text{%
			the sets 
			$I_1^\ell$, $I^p_\mu$, $I^p_\nu$
			satisfy \eqref{eq:properties_index_sets}%
		}
		&\;\Rightarrow\;
		\bar z \text{ solves \eqref{eq:some_large_system},}
		\\
		\label{eq:implication_linear_quadratic_2}
		\myrbraces*{
			\begin{aligned}
				I^g_+(\bar x, \bar \lambda) &= I^\ell_1 \\
				I^{0+}(\bar x) \cup I^{00}_{\pm\R}(\bar x, \bar \mu, \bar \nu) &= I^p_\mu \\
				I^{+0}(\bar x) \cup I^{00}_{\R\pm}(\bar x, \bar \mu, \bar \nu) &= I^p_\nu
			\end{aligned}
			\;
		}
		&\;\Rightarrow\;
		\text{\eqref{eq:some_large_system} is uniquely solvable}
		.
	\end{align}
\end{subequations}
Note that \eqref{eq:implication_linear_quadratic_2} follows
from the assumption that \eqref{eq:matrix_relaxed_MPCC_LICQ} has full row rank.

In particular,
system \eqref{eq:some_large_system} is, in general, not uniquely solvable.
If the system has multiple solutions,
\cref{lem:saddle_point_system} and MPCC-SSOC
imply
that the matrix
\begin{equation}
	\label{eq:ugly_matrix}
	\begin{bmatrix}
		g'(\bar x)_{I^\ell_1}
		\\
		h'(\bar x)
		\\
		G'(\bar x)_{I^p_\mu}
		\\
		H'(\bar x)_{I^p_\nu}
	\end{bmatrix}
\end{equation}
does not possess
full row rank.
Note that this matrix might possess more rows than \eqref{eq:matrix_relaxed_MPCC_LICQ}.

We will see that it is possible to
remove one index from one of the index sets
$I_1^\ell$, $I^p_\mu$, $I^p_\nu$
such that the inclusions \eqref{eq:properties_index_sets}
are still satisfied,
as long as \eqref{eq:some_large_system} is not uniquely solvable.
Thus, \eqref{eq:implication_linear_quadratic}
will imply that we can find the solution $\bar z$
using this strategy repeatedly.

We can filter out the linearly dependent rows
from the knowledge of our current iterate.
The possible linearly dependent rows in \eqref{eq:ugly_matrix}
correspond to the index sets
\begin{equation}
	\label{eq:bad_guys}
	I^\ell_1 \setminus I^g_+(\bar x, \bar\lambda)
	,\qquad
	I^p_\mu \setminus \paren[\big]{I^{0+}(\bar x)\cup I^{00}_{\pm\R}(\bar x, \bar\mu, \bar\nu)}
	,\qquad
	I^p_\nu \setminus \paren[\big]{I^{+0}(\bar x)\cup I^{00}_{\R\pm}(\bar x, \bar\mu, \bar\nu)}
	.
\end{equation}
Using \eqref{eq:properties_index_sets},
these indices are contained in
\begin{align*}
	I^g_0(\bar x, \bar\lambda)
	&:=
	I^g(\bar x)\setminus I^g_+(\bar x,\bar\lambda),
	\\
	I^{00}_{0\R}(\bar x, \bar\mu, \bar\nu) 
	&:= 
	I^{00}(\bar x) \setminus I^{00}_{\pm\R}(\bar x, \bar\mu, \bar\nu),
	\\
	I^{00}_{\R0}(\bar x, \bar\mu, \bar\nu) 
	&:= 
	I^{00}(\bar x) \setminus I^{00}_{\R\pm}(\bar x, \bar\mu, \bar\nu),
\end{align*}
respectively.
Now, we use the following procedure:
First, we sort the indices
\[
	\begin{aligned}
	&i \in I^\ell_1 &	&\text{ according to } \lambda_i \\
	&j \in I^p_\mu &	&\text{ according to } \max( \abs{\mu_j}, \abs{H_j(x)} ) \\
	&j \in I^p_\nu &	&\text{ according to } \max( \abs{\nu_j}, \abs{G_j(x)} )
	\end{aligned}
\]
in increasing order.
If the current iterate is sufficiently close to the solution,
the above list will contain the problematic indices from \eqref{eq:bad_guys} at the top.
Then we can remove the indices one-by-one
from the corresponding index sets,
until the system \eqref{eq:some_large_system} becomes
uniquely solvable.
Note that this modification of the index sets
ensures that \eqref{eq:properties_index_sets} stays valid.
Hence, the solution $\bar z$
remains to be a solution of \eqref{eq:some_large_system}, cf.\ \eqref{eq:implication_linear_quadratic_1}.
Since the matrix \eqref{eq:matrix_relaxed_MPCC_LICQ} has full row rank,
this process stops if all indices from \eqref{eq:bad_guys} are removed
(or earlier).

\begin{example}
	\label{ex:scheel_scholtes}
	We consider the classical example \cite[Example~3]{ScheelScholtes2000}
	with a quadratic regularization term.
	That is, we have
	$n = 3$, $\ell = 2$, $m = 0$, as well as $p = 1$,
	and the functions are given by
	\begin{equation*}
		f(x) = x_1 + x_2 - x_3 + \frac c2 \, \norm{x}^2,
		\;
		g(x) = \begin{pmatrix}-4 \, x_1 + x_3 \\ -4 \, x_2 + x_3\end{pmatrix},
		\;
		G(x) = x_1,
		\;
		H(x) = x_2,
	\end{equation*}
	where $c > 0$ is the regularization parameter.
	The global minimizer is $\bar x = 0$,
	and this point is M-stationary with multipliers
	$\bar \lambda = (3/4, 1/4)$,
	$\bar \mu = 2$, $\bar \nu = 0$.
	An alternative set of multipliers is
	$\tilde \lambda = (1/4, 3/4)$,
	$\tilde \mu = 0$, $\tilde \nu = 2$.
	Since all four constraints are active in $\bar x$,
	MPCC-LICQ cannot be satisfied.
	However, the matrix \eqref{eq:matrix_relaxed_MPCC_LICQ} is given by
	\begin{equation*}
		\begin{bmatrix}
			g'(\bar x)_{\{1,2\}} \\
			G'(\bar x)
		\end{bmatrix}
		=
		\begin{pmatrix}
			-4 & 0 & 1 \\
			0 & -4 & 1 \\
			1 & 0 & 0
		\end{pmatrix}
	\end{equation*}
	and this matrix possesses full row rank.
	Since $\nabla^2f(\bar x)$ is positive definite,
	MPCC-SSOC holds as well.
	Hence, the above theory applies
	and we obtain one-step convergence
	if the initial guess is sufficiently close
	to $(\bar x, \bar\lambda, \bar\mu, \bar \nu)$,
	see \cref{rem:one_step_convergence_for_linear_quadratic_problems} as well.
\end{example}

The application of this idea to problems which are not linear-quadratic
is subject to future research. 
We expect that the above idea can be generalized easily to problems
with affine constraints.
In this situation,
every local minimizer is M-stationary.
Hence, it is suitable to solve such problems
with an algorithm capable to find M-stationary points.

\section{Numerical results}
\label{sec:numerics}

\subsection{Implementation and parameters}
\label{sec:implementation}

Numerical experiments were carried out using MATLAB (R2020a).
Therefore, we implemented \cref{alg:global}
with the modifications discussed in \cref{sec:linear_quadratic}.
These modifications allow us to solve the system $DF(z_k)d_k = -F(z_k)$
if $DF(z_k)$ is not invertible but $z_k$ is near the solution
in the case of linear-quadratic MPCCs.
For the parameters in \cref{alg:global}, we chose 
$q=0.999$, $\tau_\textup{abs}=10^{-11}$, $\rho=10^{-3}$,
and $\sigma = \beta = 1/2$.

	We also implemented \cite[Algorithm~4.1]{GuoLinYe2015}.
	Since we are interested in M-stationary points,
	we chose $F$ and $W$ according to \cite[(16), (17)]{GuoLinYe2015}.
	We used the parameters $\sigma=1$ and $\eta=1/10$,
	which are the same as in
	\cite[Section~6]{GuoLinYe2015},
	but changed the termination condition from
	$\min(\norm{F(w^k)},\norm{d^k})\leq10^{-6}$
	to 
	$\min(\norm{F(w^k)},\norm{d^k})\leq10^{-11}$
	in order to have greater similarity with
	the termination condition in \cref{alg:global}.
	The quadratic program in \cite[Algorithm~4.1]{GuoLinYe2015} 
	was solved using \texttt{quadprog} in MATLAB,
	with tolerances
	\texttt{ConstraintTolerance}, \texttt{OptimalityTolerance} and \texttt{StepTolerance}
	set to $10^{-13}$, in order to achieve greater accuracy.

	For both algorithms, we used random starting points
	$z_0=(x_0,\lambda_0,\eta_0,\mu_0,\nu_0)\in\R^{n+\ell+m+2p}$,
	which were chosen from a uniform distribution on $[-n,n]^{n+\ell+m+2p}$,
	with the exception of choosing $\lambda_0\in[0,n]^\ell$
	for \cite[Algorithm~4.1]{GuoLinYe2015} 
	in order to guarantee that the starting point is feasible.
	Similarly, we used a uniform distribution on $[0,n]$
	for the initialization of the slack variables
	in \cite[Algorithm~4.1]{GuoLinYe2015}.
	Unless stated otherwise, we executed $1000$ runs of each
	algorithm for a problem.
	In  each of the runs, both algorithms were initialized with
	the same random starting point,
	which was chosen independently across the runs.
	Each run was aborted if it had not finished within $1000$ iterations.

\subsection{A toy example}

As a first small toy example we consider the
linear-quadratic MPCC from \cref{ex:scheel_scholtes}
with the choice $c=1/10$ for the regularization parameter.
Recall that this MPCC has a local minimizer at
$\bar x=(0,0,0)$, which is an M- but not an
S-stationary point.
As discussed previously, the matrix \eqref{eq:matrix_relaxed_MPCC_LICQ}
possesses full row rank and MPCC-SSOC holds, too.
Thus, our assumptions for \cref{alg:global}
and its modifications discussed in \cref{sec:linear_quadratic}
are satisfied.

	We performed numerical tests for this MPCC as described in
	\cref{sec:implementation}.
	For \cref{alg:global}, the termination criterion was always reached
	and the average euclidean distance of the calculated solution
	to $\bar x$ was $5.6\cdot 10^{-17}$.
	We could also observe the local one-step convergence that we expected from
	\cref{rem:one_step_convergence_for_linear_quadratic_problems}.
	The average number of iterations was $7.19$,
	and each run took $0.007$ seconds in average.

	We also performed numerical tests for this toy problem with
	\cite[Algorithm~4.1]{GuoLinYe2015}.
	Here, the termination criterion was always reached
	and the average euclidean distance of the calculated solution 
	to $\bar x$ was $2.2\cdot 10^{-7}$.
	In all runs the termination criterion was met because the norm 
	of the search direction $d_k$ became sufficiently small
	(and not because $\norm{F(w_k)}$ was sufficiently small).
	The average number of iterations was $15.04$,
	and each run took $0.026$ seconds in average.

\subsection{Convergence towards non-M-stationary points}
\label{subsec:non-M}
	For each parameter $\varepsilon \ge 0$, we consider the two-dimensional problem
	\begin{align*}
		\min_x \quad&\tfrac12 \, \norm{x - b_\varepsilon}^2 \\
		\text{s.t.}\quad& x_1 \ge 0, \; x_2 \ge 0, \; x_1 \, x_2 = 0,
	\end{align*}
	where $b_\varepsilon = (1, -\varepsilon)$.
	We denote the corresponding merit function from \cref{sec:glob}
	by $\PhiFB^\varepsilon$.
	The unique global minimizer of the MPCC is $\bar x = (1,0)$
	(independent of $\varepsilon \ge 0$)
	and this point is strongly stationary.
	In case $\varepsilon > 0$, there is no further M-stationary point,
	but for $\varepsilon = 0$, the point $\tilde x = (0,0)$
	is M-stationary.
	By attaching the unique multipliers,
	we obtain that the point
	$\tilde z = (0,0,1,0)$
	is an isolated local minimizer of $\PhiFB^0$
	and $\PhiFB^0(\tilde z) = 0$ since $\tilde z$ is an M-stationary primal-dual tuple.
	Hence, it can be expected that $\PhiFB^\varepsilon$ still has a local minimizer
	in the neighborhood of $\tilde z$ for a small perturbation $\varepsilon > 0$,
	but this local minimizer cannot be M-stationary.
	The same argument can be applied to the function $\theta$ from \cite[(21)]{GuoLinYe2015}.

	For the numerical computations, we chose $\varepsilon = 0.2$.
	We executed 1000 runs for both algorithms.
	In $659$ cases, \cref{alg:global} converged towards a non-M-stationary primal-dual tuple,
	whereas this happened in $250$ runs of \cite[Algorithm~4.1]{GuoLinYe2015}.
	In all the other cases, the global solution $\bar x$ has been found.
	One should bear in mind that these numbers strongly depend
	on the random distribution of the initial points,
	see \cref{sec:implementation} for our choice.

	We expect that this instability of M-stationary points w.r.t.\ perturbations
	will also impede the convergence of similar algorithms
	which try to compute M-stationary points.

\subsection{Optimal control of a discretized obstacle problem}

We consider a discretized version of the optimization problem from \cite[Section~6.1]{Wachsmuth2013:2}.
This is an infinite dimensional MPCC for which strong stationarity does not hold
at the uniquely determined minimizer.
We will see that the same property holds for its discretization.

Let us fix a discretization parameter $N \in \N$.
The optimization variable $x \in \R^{3N}$
is partitioned as $x = (y, u, \xi)$.
The discretized problem uses the data functions
\begin{align*}
	f(x) &= \tfrac12 \, \norm{y}^2 + e^\top y + \tfrac12 \, \norm{u}^2,
	&
	g(x) &= -u,
	&
	h(x) &= A \, y - u + \xi,
	\\
	&&
	G(x) &= -y,
	&
	H(x) &= \xi,
\end{align*}
where $e := (1,\ldots,1) \in \R^N$ is the all-ones vector and the matrix $A$ is given entrywise:
\begin{equation*}
	\forall i,j\in\{1,\ldots,N\}\colon\quad
	A_{i,j} :=
	\begin{cases}
		2 & \text{if } i = j, \\
		-1 & \text{if } i = j \pm 1, \\
		0 & \text{else}.
	\end{cases}
\end{equation*}
Up to the scaling, this matrix arises from the finite-difference discretization
of the one-dimensional Laplacian.

One can check that $\bar x = 0$ is the unique global minimizer.
Since all constraints are affine, $\bar x$ is an M-stationary point of this program.
Furthermore, there are no weakly stationary points (i.e., feasible points satisfying \eqref{eq:M_St_x}--\eqref{eq:M_St_nu})
different from $\bar x$.
The M-stationary multipliers $(\lambda,\eta,\mu,\nu)$ associated with $\bar x$ are not unique.
Indeed, for every diagonal matrix $D \in \R^{N \times N}$
with diagonal entries from $\{0,1\}$,
one can check that the solution of the system
\begin{equation*}
	\begin{bmatrix}
		A & I \\ I-D & D
	\end{bmatrix}
	\begin{bmatrix}
		\nu \\ \mu
	\end{bmatrix}
	=
	\begin{bmatrix}
		e \\ 0
	\end{bmatrix}
	,
	\;
	\nu = -\eta = \lambda
\end{equation*}
yields M-stationary multipliers,
and all multipliers are obtained by this construction.
Let us mention that none of these multipliers solves the associated system of S-stationarity
since for each $i\in\N$, either $\mu_i$ or $\nu_i$ is positive.
Thus, $\bar x$ is not an S-stationary point.
For all these multipliers,
the matrix \eqref{eq:matrix_relaxed_MPCC_LICQ} possesses full row rank.
On the other hand, MPCC-LICQ is clearly violated at $\bar x$ since this point
is a local minimizer of the given MPCC which is not S-stationary.
Finally, one can check that MPCC-SSOC is valid at $\bar x$ w.r.t.\ all the
multipliers characterized above.

	We performed numerical tests as described in \cref{sec:implementation}
	with the discretization parameter $N=256$.
	For \cref{alg:global}, the solution has been found in every
	run, with an average error of $6.7\cdot 10^{-31}$ (measured in the euclidean distance).
	The average number of iterations was $13.38$,
	and each run took $2.36$ seconds in average.
	Again, we could also observe the local one-step convergence
	that we expected from 
	\cref{rem:one_step_convergence_for_linear_quadratic_problems}.
	The associated multipliers differed from run to run.
	That behavior, however, had to be
	expected since the associated multiplier set $\Lambda^\textup{M}(\bar x)$
	is not a singleton.

	In comparison, \cite[Algorithm~4.1]{GuoLinYe2015}
	did not converge within $1000$ iterations in each run
	(we only executed $10$ runs of the algorithm instead of the usual
	$1000$ runs due to the long runtime).
	Each run took $7298$ seconds in average,
	and the average of the error $\norm{x_{1000}-\bar x}$
	of the final iterate $x_{1000}$ was $4538$.
	One reason for the long runtime of
	\cite[Algorithm~4.1]{GuoLinYe2015}
	is that a quadratic program in $14N=3584$ dimensions needs to be 
	solved in each iteration.

	We also conducted numerical tests for a very coarse discretization
	with $N=4$.
	Here, \cite[Algorithm~4.1]{GuoLinYe2015}
	performed significantly better.
	In all runs, the termination criterion was met
	because the search direction $d_k$ became sufficiently small
	(and not because $\norm{F(w_k)}$ was sufficiently small).
	The average euclidean distance of the calculated solution
	to $\bar x$ was $7.8\cdot 10^{-4}$.
	The average number of iterations was $41.75$,
	and each run took $0.149$ seconds in average.

	For this coarse discretization (with $N=4$),
	\cref{alg:global} also performed reasonably well.
	In all runs, the termination criterion was met
	and the average euclidean distance of the calculated solution
	to $\bar x$ was $6.9\cdot 10^{-16}$.
	The average number of iterations was $2.91$,
	and each run took $0.003$ seconds in average.

\section{Conclusion and outlook}\label{sec:conclusions}

We demonstrated that the M-stationarity system of a mathematical
problem with complementarity constraints can be reformulated as a system of nonsmooth
and discontinuous equations, see \cref{sec:M_St_as_system}. 
It has been shown that this system can be solved
with the aid of a semismooth Newton method. Local fast convergence to M-stationary
points can be guaranteed under validity of MPCC-SSOC and MPCC-LICQ, 
see \cref{thm:local_convergence}, where the
latter assumption can be weakened in case of linear-quadratic problems, 
see \cref{sec:linear_quadratic}.
Furthermore, we provided a reasonable globalization strategy, see \cref{sec:glob}.
There is some hope that similar to \cref{sec:linear_quadratic}, it
is possible to weaken MPCC-LICQ in the setting of nonlinear constraints if
only validity of a suitable constant rank assumption can be guaranteed.
Clearly, the fundamental ideas of this paper are not limited to mathematical
programs with complementarity constraints but can be adjusted in order to
suit other problem classes from disjunctive programming such as 
vanishing-constrained, switching-constrained, or cardinality-constrained
programs, see \cite{Mehlitz2019b} for an overview. 
It remains an open question to what extent the theory of this
paper can be generalized to infinite-dimensional complementarity-constrained
optimization problems.

\bibliographystyle{siamplain}
\bibliography{world}

\end{document}